\pgfplotsset{compat=1.13}
\newcommand{\norm}[1]{{\left\lVert#1\right\rVert}}
\newcommand{\normsq}[1]{{\left\lVert#1\right\rVert^2}}
\DeclareMathOperator{\val}{val}
\DeclareMathOperator{\trace}{Tr}
\DeclareMathOperator*{\argmin}{argmin}
\DeclareMathOperator{\FmuL}{\mathcal{F}_{\mu,L}}
\DeclareMathOperator{\FL}{\mathcal{F}_{0,L}}
\newcommand{\real}{\mathbb{R}}
\newcommand{\bx}{\mathbf{x}}
\newcommand{\bg}{\mathbf{g}}
\newcommand{\bfu}{\mathbf{f}}
\newcommand{\icA}{A^{\mathrm{ic}}}
\newcommand{\ica}{a^{\mathrm{ic}}}
\newcommand{\icb}{b^{\mathrm{ic}}}
\newcommand{\Tr}[1]{{\trace\left(#1\right)}}
\newcommand{\Rd}{\mathbb{R}^d}
\newcommand{\inner}[2]{{\langle #1, #2\rangle}}
\newcommand{\ccp}{c.c.p.\xspace}
\newcommand{\fopt}{f_*}
\newcommand{\xopt}{x_*}
\newcommand{\I}{I^*_N}
\newcommand{\K}{K_N}
\newcommand{\GFOM}{\mathrm{GFOM}}
\newcommand{\AT}[1]{{#1}}
\newcommand{\YD}[1]{{#1}}
\journalname{Journal Name}
\begin{document}
	
\title{Efficient First-order Methods for Convex Minimization: a Constructive Approach\thanks{The second author was supported by the European Research Council (ERC) under the European Union's Horizon 2020 research and innovation program (grant agreement 724063).}}

\titlerunning{A constructive approach to efficient first-order methods}    

\author{Yoel Drori\footnote{\label{fn1}Equal contributions}\addtocounter{footnote}{-1}\addtocounter{Hfootnote}{-1}, Adrien B. Taylor\footnotemark}

\authorrunning{Y.~Drori, A.B.~Taylor}

\institute{Yoel Drori \at Google LLC, 1600 Amphitheatre Parkway, Mountain View, CA 94043, United States\\ E-mail: dyoel@google.com\\ Adrien  Taylor  \at
	INRIA, D\'epartement d'informatique de l'ENS, \'Ecole normale sup\'erieure, CNRS, PSL Research University, Paris, France
	E-mail: adrien.taylor@inria.fr}

\date{Date of current version: \today}

\maketitle

\begin{abstract} 
	We describe a novel constructive technique for devising efficient first-order methods for a wide range of large-scale convex minimization settings, including smooth, non-smooth, and strongly convex minimization.
	The technique \YD{builds upon} a \AT{certain \YD{variant of the} conjugate gradient method} \YD{to construct} a family of methods such that \begin{enumerate*}[a)]
	\item all methods in the family share the same worst-case guarantee as the base \AT{conjugate gradient} method, and
	\item the family includes a fixed-step first-order method.
	\end{enumerate*}
	We demonstrate the effectiveness of the approach by deriving optimal methods for the smooth and non-smooth cases, including new methods that forego knowledge of the problem parameters at the cost of a one-dimensional line search per iteration, and a universal method for the union of these classes that requires a three-dimensional search per iteration.
	In the strongly convex case, we show how numerical tools can be used to perform the construction, and show that the resulting method offers an improved worst-case bound compared to Nesterov's celebrated fast gradient method.
\end{abstract}

	\section{Introduction}
	
	Convex optimization plays a central role in many fields of applications, including optimal control, machine learning and signal processing. In particular, when a large number of variables are involved within a convex optimization problem, the use of first-order methods is more and more widespread due to their typically very attractive low computational cost per iteration. This low computational cost comes, however, at a price: first-order methods often suffer from potentially slow convergence speeds, making them appropriate mostly for obtaining low to medium accuracy solutions.
	Nevertheless, first-order methods remain the methods of choice in many applications and currently receive a lot of attention from the optimization community, which constantly 
	aims at improving them.
	
	An effective and fruitful approach used for analyzing and comparing first-order methods is the study of their worst-case behavior through the black-box model. In this setting, methods are only allowed to gain information on the objective through an oracle, which provides the value and the gradient of the objective at selected points.
	Historically, this approach was largely motivated by the seminal work of Nemirovski and Yudin~\cite{Book:NemirovskyYudin}, 
	and later by the work of Nesterov~\cite{Book:Nesterov}.
	These works established lower and upper bounds on the worst-case performances of first-order methods on several important classes of problems
	and initiated the search for optimal algorithms, which exhibit the best possible worst-case performances (up to a constant factor) for the class of problems they were designed to solve. 

In this work, we consider the generic task of designing first-order methods for convex minimization. 
The suggested approach starts from a conceptual method that does not have an efficient implementation. Then, we show, from the analysis of this method, that one can construct efficient implementations that benefits from the same worst-case guarantees.
This design approach has been considered several times in the past, see \cite{lemarechal1997variable,nemirovski2004prox} and many more. Here, unlike the alluded works, the chosen conceptual method is a very fundamental method capable of handling diverse families of problems, making the design approach applicable to a variety of settings.
The conceptual algorithm we choose is a variant of the conjugate-gradient method, whose analysis therefore occupies an important place in the sequel.

\subsection{Related works}
	
A large number of generic techniques for developing optimization methods were proposed in the past years.
Below we give a short overview of such techniques tailored for convex optimization; we \AT{do} not attempt to give a comprehensive list.
	
	\subsubsection{Links with subspace-search methods} One core idea underlying several classical optimization algorithms, and also strongly related to the technique proposed below, is the use of subspace-searches. Among them,  conjugate gradient methods (see e.g.,~\cite{hestenes1952methods,wright1999numerical}), which can be seen as methods performing minimization steps on increasingly larger subspaces, have a prominent place. 
	
	Related to that, the original optimal methods for smooth convex minimization, developed by Nemirovski and Yudin~\cite{nemirovski1982orth,nemirovski1983information} (see e.g., the review in~\cite{narkiss2005sequential}), requires two and three-dimensional subspace minimization at each iteration and is therefore reminiscent of subspace-search methods. For smooth convex minimization, those methods can be seen as predecessors to the celebrated Nesterov's fast gradient method~\cite{Nesterov:1983wy}, which achieves the same \emph{optimal} convergence rate without relying on those exact subspace minimizations steps. Motivated by similarities between the subspace-search methods of Nemirovski and Yudin~\cite{nemirovski1982orth,nemirovski1983information} and Nesterov's fast gradient method~\cite{Nesterov:1983wy}, we propose a generic technique which transforms subspace-search Conjugate Gradient-like methods to fixed-step methods that have equal or better worst-case performances.
	This technique was also premised in two previous works by the authors:
	\begin{itemize}
		\item In~\cite[Remark 3.1]{drori2016exact}, Drori remarked that the lower bound for smooth convex unconstrained minimization was achieved by a greedy method (referenced to as the \emph{ideal first-order method}),
		\item In~\cite[Section 4.1]{de2016worst}, de Klerk et al.\@{ }study the worst-case complexity of steepest descent with exact line search applied to strongly convex functions. As suggested by an anonymous referee in~\cite{de2016worst}, the worst-case certificates were also valid for the gradient method with an appropriate fixed-step size.
	\end{itemize}
	Links between fast gradient methods and conjugate gradient methods were also recently analyzed in~\cite{karimi2016unified}.
	
	\subsubsection{Links with fast gradient methods}
	Fast gradient schemes for minimizing smooth convex and smooth strongly convex functions originated in the seminal works of Nesterov~\cite{Nesterov:1983wy,Book:Nesterov} (fast gradient methods), Polyak~\cite{polyak1964some,Book:polyak1987} (heavy-ball method) and Nemirovski and Yudin~\cite{nemirovski1982orth,nemirovski1983information}. 
	Despite its fundamental nature, acceleration remained an obscure phenomenon relying on an algebraic trick for years, and many authors have recently developed new explanations for this behavior. For smooth strongly convex minimization, recent popular works include geometrical approaches such as a shrinking ball scheme~\cite{bubeck2015geometric}, a new method based on lower quadratic approximations~\cite{drusvyatskiy2016optimal}, analyses relying on stability theory for discrete-time and/or continuous-time dynamical systems~\cite{fazlyab2017analysis,lessard2014analysis,wilson2016lyapunov} (specifically for fast gradient in~\cite{hu2017dissipativity}), or even as a specific integration scheme of the gradient flow~\cite{scieur2017integration}. In the context of smooth convex minimization, another recent trend include parallels with differential equations~\cite{su2014differential} --- some of the previous approaches for the smooth strongly convex case also apply without strong convexity.
	
\subsubsection{Links with systematic and computer-assisted approaches to worst-case analyses}
	
	This work takes place within the current effort for the development of systematic/computer-guided analyses and design of optimization algorithms. Among them, a systematic approach to lower bounds (which focuses on quadratic cases) is presented in by Arjevani et al.\@~in~\cite{arjevani2016lower}, a systematic use of control theory (via integral quadratic constraints) for developing upper bounds is presented by Lessard et al.\@~in~\cite{lessard2014analysis}, and the performance estimation approach, which aims at finding worst-case bounds was originally developed in~\cite{Article:Drori} (see also surveys in~\cite{drori2014contributions} and~\cite{taylor2017convex}).
	
	Those methodologies are mostly presented as tools for performing worst-cases analyses (see the numerous examples in~\cite{drori2014contributions,hu2017dissipativity,taylor2017convex,taylor2015exact,taylor2015smooth,taylor2017pgm}), however, such techniques were also recently used to develop new methods with improved worst-case complexities. Among others, such an approach was used in \cite{Article:Drori,kim2014optimized} to devise a fixed-step method that attains the best possible worst-case performance for smooth convex minimization \cite{drori2016exact}, and later in \cite{drori2014optimal} to obtain a variant of Kelley's cutting plane method with the best possible worst-case guarantee for non-smooth convex minimization. Also, the control-theoretic approach presented by Lessard et al.\@~in~\cite{lessard2014analysis} was used in~\cite{VanScoy2017} for developing a new accelerated method for smooth strongly convex minimization, called the triple momentum method.

	\subsection{Paper organization and main contributions}
	
	The paper is organized as follows. First, Section~\ref{sec:cvx_interp} introduces elementary facts and definitions that are used throughout this work. Then, Section~\ref{sec:review} presents a specific variant of the conjugate gradient method, which we refer to as the \emph{Greedy First-Order Method} (GFOM), along with the corresponding tools for analyzing it.	
	Following that, Section~\ref{sec:gfom_to_fsfom} proposes a procedure for constructing fixed-step first-order methods that benefits from the same worst-case guarantees as that of GFOM. The procedure is applied on the class of non-smooth and smooth convex functions, and is shown to produce families of first-order methods achieving the best-possible worst-case bounds in both settings.	
	Section~\ref{sec:numerical_gfom_to_fsfom} is devoted the strongly-convex case, where no analytical solution is known to the problem that arises from the design procedure. We show that the resulting numerically-defined algorithm attains both an efficient implementation and an improved worst-case bounds as compared to a standard fast gradient method of Nesterov \cite[Section 2.2]{Book:Nesterov}. Finally, we conclude and discuss extensions in Section~\ref{sec:ccl}.
	
\subsection{Notations}\label{sec:notations}
Consider the convex minimization problem
\begin{equation}
\fopt :=\min_{x\in\Rd} f(x),\label{eq:OptOrig}\tag{OPT}
\end{equation}
with $f\in\mathcal{F}(\mathbb{R}^d)$, for some class $\mathcal{F}(\mathbb{R}^d)$ of closed, convex and proper (\ccp) functions $f:\Rd\rightarrow \mathbb{R}$. For notational convenience, we use the notation $f\in\mathcal{F}$ when the dimension $d$ is left unspecified, the notation $\val\eqref{eq:OptOrig}$ when referring to the optimal value of the problem $\fopt$, and by $\xopt$ to denote an element in $\argmin f$.
	
Additionally, we denote by $x_i\in\Rd$ the iterates produced by the different optimization schemes, and use $f'(x_i)$ to denote an element in the subdifferential $\partial f(x_i)$.
\YD{When $g_i\in \Rd$ is an arbitrary vector, we use the standard notation $g_i\in \partial f(x_i)$ to specify the requirement that $g_i$ is a subdifferential of $f$ at~$x_i$.}
The set $\{x_i\}_{i\in \I}$ containing the first $N$ iterates and an optimal point \AT{is often} used, where the index set $\I$ is defined as follows:
\begin{equation}
\I:=\{*,0,\hdots,N\}.
\end{equation}
\AT{Additionally, we use} the standard notation $\inner{\cdot}{\cdot}:\Rd\times\Rd\rightarrow\real$ to denote the Euclidean inner product, and the corresponding induced norm $\norm{\cdot}$. Given a positive semidefinite matrix $A\succeq 0$, we also use the notation $\inner{\cdot}{\cdot}_A=\inner{A\cdot}{\cdot}$ and the corresponding induced semi-norm $\norm{\cdot}_A$. Finally, we use the notation $(\cdot\odot \cdot):\Rd\times\Rd\rightarrow \real^{d\times d}$ to denote the symmetric outer product, that is, for any $x,y\in\Rd$:
\[ x\odot y=\frac12 (xy^{\top\!}+yx^{\top\!}),\]
resulting in the following useful identity: $\inner{x}{y}_A=\Tr{A(x\odot y)}$.

\section{Basic definitions} \label{sec:cvx_interp}
This section briefly introduces some definitions that 
we \AT{use} in the forthcoming analyses. We start by introducing, for the sake of convenience, a shorthand notation for the set of inputs expected by the algorithms \AT{under consideration}. 
We continue by introducing a notation \AT{allowing} us to implicitly define a function based on some local first-order information.
\begin{definition}\label{def:input}
A pair $(f,x_0)$ is called an \emph{$(\mathcal{F}(\Rd),R_x)$-input} if
$\mathcal{F}(\Rd)$ is a class of closed, convex and proper (\ccp) functions over $\Rd$, $R_x$ is a nonnegative constant, $f\in\mathcal{F}(\Rd)$, $x_0\in \Rd$, and $\norm{x_0-\xopt}\leq R_x$ holds for some $\xopt\in\argmin f(x)$.
\end{definition}
	
\begin{definition}\label{def:Finterpolability}
	Let $\mathcal{F}$ be a class of c.c.p. functions. A set of triplets $S=\{(x_i,g_i,f_i)\}_{i\in I}$ (for some index set $I$) is called \emph{$\mathcal{F}$-interpolable} if there exists a function $f\in\mathcal{F}$ such that $g_i \in \partial f(x_i)$ and $f_i=f(x_i)$ for all $i \in I$.
\end{definition}
	
	For many classes of functions $\mathcal{F}$, the condition ``$S$ is \emph{$\mathcal{F}$-interpolable}'' can be expressed as a finite set of constraints on the elements of $S$. In these cases, we refer to this set of constraints \AT{as} \emph{interpolation conditions} for the class $\mathcal{F}$.
	See~\cite{taylor2015exact,taylor2015smooth} for a list of known interpolation conditions for \AT{different} classes of functions, along with the corresponding proofs.

	For the sake of completeness, we include below the interpolation conditions for the class of strongly-convex smooth functions and for the class of non-smooth convex functions. These classes \AT{are} used in the examples presented in Sections~\ref{sec:gfom_to_fsfom} and~\ref{sec:numerical_gfom_to_fsfom}.

	\begin{theorem}[{\cite[Theorem~4]{taylor2015smooth}}]\label{thm:smooth_sc_ic}
		Let $I$ be a finite index set and let $\FmuL$ denote the set of $L$-smooth and $\mu$-strongly convex functions for some $0\leq \mu < L \leq \infty$.
		A set $\{(x_i,g_i,f_i)\}_{i\in I}$ is $\FmuL$-interpolable if and only if for all $i,j\in I$
		\begin{equation}\label{eq:smooth_sc_ic}
		f_i\geq f_j+\inner{g_j}{x_i-x_j}+\frac{1}{2(1-\mu/L)}\left(\frac1L \norm{g_i-g_j}^2+\mu \norm{x_i-x_j}^2-2\frac{\mu}{L}\inner{x_i-x_j}{g_i-g_j}\right).
		\end{equation}
	\end{theorem}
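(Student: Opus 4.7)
The statement is an ``if and only if'', so I would treat the two directions separately, starting with necessity and concluding with the constructive sufficiency direction. The unifying idea for both halves is the classical reduction that $f \in \FmuL$ precisely when $\tilde f(x) := f(x) - \tfrac{\mu}{2}\norm{x}^2$ belongs to $\mathcal{F}_{0,L-\mu}$, which converts a strongly-convex-smooth interpolation question into a purely smooth-convex one.

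For necessity, suppose the triplets are $\FmuL$-interpolated by some $f$. I would set $\tilde f_i = f_i - \tfrac{\mu}{2}\norm{x_i}^2$ and $\tilde g_i = g_i - \mu x_i$, so that $\tilde g_i \in \partial \tilde f(x_i)$ and $\tilde f_i = \tilde f(x_i)$ with $\tilde f$ convex and $(L-\mu)$-smooth. The classical co-coercivity inequality
\[
\tilde f_i \geq \tilde f_j + \inner{\tilde g_j}{x_i - x_j} + \frac{1}{2(L-\mu)} \norm{\tilde g_i - \tilde g_j}^2
\]
then applies, and expanding it in the original variables should recover \eqref{eq:smooth_sc_ic} after a routine algebraic bookkeeping in which the quadratic terms from $\tfrac{\mu}{2}\norm{\cdot}^2$ combine cleanly with $\norm{\tilde g_i-\tilde g_j}^2$ to generate the $\mu\norm{x_i-x_j}^2$ and cross-terms in the bracket.

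For sufficiency, the same shift reduces the problem to constructing an $\mathcal{F}_{0,L-\mu}$-interpolant $\tilde f$ from the shifted data $\{(x_i,\tilde g_i,\tilde f_i)\}$, after which $f := \tilde f + \tfrac{\mu}{2}\norm{\cdot}^2$ is the desired $\FmuL$ interpolant. I would reach for an explicit construction via duality: form the piecewise-affine convex minorant $h(y) = \max_{i\in I}\{\tilde f_i + \inner{\tilde g_i}{y - x_i}\}$ and then take its Moreau envelope with parameter $L-\mu$,
\[
\tilde f(x) = \min_{y \in \Rd}\Bigl\{ h(y) + \tfrac{L-\mu}{2}\norm{y-x}^2\Bigr\},
\]
which is automatically convex and $(L-\mu)$-smooth (smoothness following from the strongly convex quadratic regularizer in the infimum).

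The main obstacle is the verification that the constructed $\tilde f$ actually interpolates the data, i.e.\ $\tilde f(x_i) = \tilde f_i$ and $\tilde g_i \in \partial \tilde f(x_i)$. By the KKT / envelope calculus this reduces to showing that the inner minimizer at $x = x_i$ is attained at $y_i = x_i - \tilde g_i/(L-\mu)$, and that the index $j$ realizing the max in $h(y_i)$ is $j = i$; equivalently, that for every $j\neq i$,
\[
\tilde f_i \geq \tilde f_j + \inner{\tilde g_j}{y_i - x_j}.
\]
Substituting $y_i$ and simplifying, this is exactly the shifted form of \eqref{eq:smooth_sc_ic}. In other words, the inequality \eqref{eq:smooth_sc_ic} is precisely the tight condition that the envelope construction consumes, which a posteriori justifies its form. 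Ensuring tightness symmetrically across all $i$ (so that the subgradient recovered at $x_i$ is $\tilde g_i$ and not some convex combination) is the subtle quantitative point of the argument, and it is the step carried out in detail in~\cite{taylor2015smooth}.
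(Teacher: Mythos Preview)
The paper does not itself prove this theorem; it is quoted verbatim from~\cite{taylor2015smooth} and used as a black box. So there is no in-paper proof to compare against, and your task is really to reconstruct the argument of~\cite{taylor2015smooth}. Your necessity direction via the shift $\tilde f = f - \tfrac{\mu}{2}\norm{\cdot}^2$ and co-coercivity of $\tilde f\in\mathcal F_{0,L-\mu}$ is correct and is exactly the reduction used there.

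The sufficiency direction, however, has a genuine gap: your Moreau-envelope interpolant does \emph{not} match the function values. If $y_i = x_i - \tilde g_i/(L-\mu)$ is the inner minimizer and index $i$ is active in $h$ at $y_i$, then
\[
\tilde f(x_i)\;\le\; h(y_i)+\tfrac{L-\mu}{2}\norm{y_i-x_i}^2
\;=\;\tilde f_i-\tfrac{1}{L-\mu}\norm{\tilde g_i}^2+\tfrac{1}{2(L-\mu)}\norm{\tilde g_i}^2
\;=\;\tilde f_i-\tfrac{1}{2(L-\mu)}\norm{\tilde g_i}^2,
\]
so $\tilde f(x_i)<\tilde f_i$ whenever $\tilde g_i\neq 0$. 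Relatedly, the inequality you need for ``$j=i$ realizes the max in $h(y_i)$'' simplifies to $\tilde f_i-\tilde f_j-\inner{\tilde g_j}{x_i-x_j}\ge \tfrac{1}{L-\mu}\inner{\tilde g_i}{\tilde g_i-\tilde g_j}$, which is \emph{not} the shifted form of~\eqref{eq:smooth_sc_ic} (the latter has $\tfrac{1}{2(L-\mu)}\norm{\tilde g_i-\tilde g_j}^2$ on the right), and in fact need not follow from it. The construction in~\cite{taylor2015smooth} avoids this by passing to the Fenchel conjugate: one checks that the conjugate data $\{(\tilde g_i,\,x_i,\,\inner{x_i}{\tilde g_i}-\tilde f_i)\}$ satisfies the $\tfrac{1}{L-\mu}$-strong-convexity interpolation inequalities, builds the explicit strongly convex interpolant as a pointwise maximum of paraboloids, and then conjugates back to obtain an $(L-\mu)$-smooth $\tilde f$ that interpolates the original shifted data exactly. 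Your reduction to $\mathcal F_{0,L-\mu}$ is the right first move; it is the choice of explicit interpolant that needs to change.
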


	\begin{theorem}[{\cite[Theorem~4]{taylor2015exact}}]\label{thm:nonsmooth_ic}
		Let $I$ be a finite index set and let $\mathcal{C}_M$ denote the set of Lipschitz \ccp functions whose gradient is bounded in norm by $M$ for some $0\leq M\leq \infty$.
		A set $\{(x_i,g_i,f_i)\}_{i\in I}$ is $\mathcal{C}_M$-interpolable  if and only if for all $i,j\in I$
		\begin{equation}\label{eq:nonsmooth_ic}
		\begin{aligned}
		&f_i\geq f_j+\inner{g_j}{x_i-x_j},\\ &\norm{g_i} \leq M.
		\end{aligned}
		\end{equation}
	\end{theorem}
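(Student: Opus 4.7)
The plan is to establish the two directions separately, handling necessity by a short appeal to convexity/Lipschitz continuity and sufficiency by an explicit construction.

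For the necessity direction (``only if''), assume $f\in\mathcal{C}_M$ with $g_i\in\partial f(x_i)$ and $f_i=f(x_i)$ for every $i\in I$. The first inequality in \eqref{eq:nonsmooth_ic} is then nothing more than the defining subgradient inequality of the convex function $f$, applied at the pair $(x_i,x_j)$. The second inequality follows from the standard equivalence between $M$-Lipschitz continuity of a c.c.p.\ convex function and its subgradients being bounded in norm by $M$, so $\|g_i\|\le M$ for every $i\in I$.

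For the sufficiency direction (``if''), given a set $\{(x_i,g_i,f_i)\}_{i\in I}$ satisfying \eqref{eq:nonsmooth_ic}, I would construct the canonical piecewise-affine interpolant
\[
f(x)\ :=\ \max_{i\in I}\,\bigl\{f_i+\inner{g_i}{x-x_i}\bigr\},
\]
which is c.c.p.\ as a finite maximum of affine functions. Three things must then be checked: (i)~$f(x_i)=f_i$ for every $i\in I$, (ii)~$g_i\in\partial f(x_i)$ for every $i\in I$, and (iii)~$f\in\mathcal{C}_M$. Point (i) is immediate from the first inequality of \eqref{eq:nonsmooth_ic}: it says that the affine piece indexed by $i$ achieves the maximum at $x_i$. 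Point (ii) follows because the affine piece $y\mapsto f_i+\inner{g_i}{y-x_i}$ lies globally below $f$ and coincides with $f$ at $x_i$, so its slope $g_i$ belongs to $\partial f(x_i)$.

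For (iii), any subgradient of $f$ at a point $x$ lies in the convex hull of $\{g_i : i\text{ active at }x\}$ (this is the standard subdifferential formula for a pointwise maximum of finitely many affine functions). Since each $g_i$ satisfies $\|g_i\|\le M$ by assumption, the convex hull is contained in the closed ball of radius $M$, and hence every subgradient of $f$ has norm at most $M$, i.e.\ $f\in\mathcal{C}_M$. I do not expect any real obstacle here; the only subtle point is recalling the subdifferential calculus for a max of affine functions, which is classical.
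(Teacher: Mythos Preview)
Your argument is correct and is the standard proof of this interpolation result. Note, however, that the paper does not actually prove Theorem~\ref{thm:nonsmooth_ic}: it is quoted verbatim from~\cite[Theorem~4]{taylor2015exact} and stated without proof, so there is no in-paper proof to compare against. Your construction via the piecewise-affine interpolant $f(x)=\max_{i\in I}\{f_i+\inner{g_i}{x-x_i}\}$ is exactly the classical one used in the cited reference, and every step you outline goes through as written.
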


	Finally, \YD{we introduce the following technical property, which will be heavily used in establishing tightness results in the sequel.}
	
	\begin{definition} \label{def:contraction} 
		A class of functions $\mathcal{F}(\Rd)$ is said to be \emph{contraction-preserving} if
		for any $\mathcal{F}$-interpolable set $S=\{(x_i,g_i,f_i)\}_{i\in I}$, where $I$ is some finite index set,
		and any $\{\hat x_i\}_{i\in I}\subset \Rd$ satisfying
		\begin{equation}\label{eq:contracted_xi}
		\norm{\hat x_i - \hat x_j}\leq \norm{x_i-x_j},\text{ and } \inner{g_j}{\hat x_i- \hat x_j}=\inner{g_j}{x_i-x_j},\quad \forall i,j \in I,
		\end{equation}
		we have that $\hat S=\{(\hat x_i,g_i,f_i)\}_{i\in I}$ is $\mathcal{F}$-interpolable.
	\end{definition}

	Two important examples of contraction \AT{preserving classes were discussed above}, namely the class of smooth (possibly strongly) convex functions and the class of non-smooth convex functions. 

	\begin{proposition}
		The class of $L$-smooth, $\mu$-strongly convex functions $\mathcal{F}_{\mu,L}$ with $0\leq \mu<L\leq \infty$ is contraction preserving.
		\label{prop:smoothcp}
	\end{proposition}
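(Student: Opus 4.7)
The plan is to invoke Theorem~\ref{thm:smooth_sc_ic} to reduce $\mathcal{F}_{\mu,L}$-interpolability to a finite list of pairwise inequalities, and then verify that each inequality transfers from $S$ to $\hat S$ under the hypotheses of Definition~\ref{def:contraction}. Concretely, I would fix $i,j\in I$ and compare the interpolation inequality~\eqref{eq:smooth_sc_ic} written for $(x_i,x_j,g_i,g_j,f_i,f_j)$ with the one needed for $(\hat x_i,\hat x_j,g_i,g_j,f_i,f_j)$: term by term, the left-hand sides are identical, and on the right-hand side only the inner products involving $\hat x_i-\hat x_j$ and the squared norm $\|\hat x_i-\hat x_j\|^2$ differ from their counterparts in $x_i-x_j$.

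The first step is to show that the two inner-product terms are unchanged. The identity $\langle g_j,\hat x_i-\hat x_j\rangle=\langle g_j,x_i-x_j\rangle$ is part of the hypothesis~\eqref{eq:contracted_xi}. Swapping the roles of $i$ and $j$ gives $\langle g_i,\hat x_j-\hat x_i\rangle=\langle g_i,x_j-x_i\rangle$, and subtracting these two identities yields $\langle g_i-g_j,\hat x_i-\hat x_j\rangle=\langle g_i-g_j,x_i-x_j\rangle$. Thus the only term in the right-hand side of~\eqref{eq:smooth_sc_ic} that is genuinely altered by replacing $x_k$ with $\hat x_k$ is the quadratic $\mu\|\hat x_i-\hat x_j\|^2$.

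The second step is to observe that the coefficient $\tfrac{\mu}{2(1-\mu/L)}$ of $\|\hat x_i-\hat x_j\|^2$ in the transformed inequality is nonnegative (since $0\le\mu<L$), so the contraction hypothesis $\|\hat x_i-\hat x_j\|\le\|x_i-x_j\|$ can only decrease the right-hand side. Combined with the preceding step, this gives that the right-hand side of~\eqref{eq:smooth_sc_ic} for $\hat S$ is bounded above by the right-hand side for $S$, which in turn is bounded above by $f_i$ by $\mathcal{F}_{\mu,L}$-interpolability of $S$. Applying this for every pair $(i,j)\in I\times I$ and invoking the ``if'' direction of Theorem~\ref{thm:smooth_sc_ic} yields $\mathcal{F}_{\mu,L}$-interpolability of $\hat S$.

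There is no real obstacle: the proof is essentially a direct substitution. The only point requiring a small remark is the boundary behavior in the smooth ($\mu=0$) and purely strongly convex ($L=\infty$) limits; in both cases the offending $\|x_i-x_j\|^2$ term either drops out entirely or has coefficient $\mu/2\ge 0$, so the monotonicity argument still applies verbatim.
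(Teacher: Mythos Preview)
Your proposal is correct and follows essentially the same route as the paper: both invoke Theorem~\ref{thm:smooth_sc_ic} and verify that the right-hand side of~\eqref{eq:smooth_sc_ic} can only decrease under the contraction hypothesis~\eqref{eq:contracted_xi}. You are in fact more explicit than the paper in justifying why the cross term $\langle g_i-g_j,\hat x_i-\hat x_j\rangle$ is preserved (via the swap-and-subtract argument), which the paper leaves to the reader.
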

	\begin{proof}
		Let $I$ be some index set, let $S=\{(x_i,g_i,f_i)\}_{i\in I}$ be a $\mathcal{F}_{\mu,L}$-interpolable set, and suppose $\hat S=\{(\hat x_i,g_i,f_i)\}_{i\in I}$ with $\hat x_i$ satisfying~\eqref{eq:contracted_xi}. Then from Theorem~\ref{thm:smooth_sc_ic}, we have $\forall i,j\in I$:
		\begin{align*}
		f_i&\geq f_j+\inner{g_j}{x_i-x_j}+\frac{1}{2(1-\mu/L)}\left(\frac1L\normsq{g_i-g_j}+\mu\normsq{x_i-x_j}-2\frac\mu L\inner{x_i-x_j}{g_i-g_j}\right),\\
		&\geq f_j+\inner{g_j}{\hat x_i - \hat x_j} + \frac{1}{2(1-\mu/L)}\left(\frac1L\normsq{g_i-g_j}+\mu\normsq{\hat x_i - \hat x_j} - 2\frac\mu L\inner{\hat x_i - \hat x_j}{g_i-g_j}\right),
		\end{align*}
		hence $\hat S$ is $\mathcal{F}_{\mu,L}$-interpolable, as those inequalities are necessary and sufficient for smooth strongly convex interpolation (see Theorem~\ref{thm:smooth_sc_ic}).
	\qed
	\end{proof}
	\begin{proposition}
		The class of \ccp functions with $M$-bounded gradients $\mathcal{C}_M$ with $0\leq M\leq \infty$ is contraction preserving.
	\label{prop:nonsmoothcp}
	\end{proposition}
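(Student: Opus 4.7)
The plan is to mirror the structure of Proposition~\ref{prop:smoothcp} but exploit the even simpler form of the interpolation conditions for $\mathcal{C}_M$ given in Theorem~\ref{thm:nonsmooth_ic}. Since the characterization of $\mathcal{C}_M$-interpolability involves only the two inequalities $f_i\geq f_j+\inner{g_j}{x_i-x_j}$ and $\norm{g_i}\leq M$, the contraction-preservation property should reduce to a direct substitution using the defining identity in~\eqref{eq:contracted_xi}.

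More precisely, I would start by fixing a finite index set $I$ and a $\mathcal{C}_M$-interpolable set $S=\{(x_i,g_i,f_i)\}_{i\in I}$, and consider a family $\{\hat x_i\}_{i\in I}$ satisfying the two requirements in~\eqref{eq:contracted_xi}. I then need to check the two conditions of Theorem~\ref{thm:nonsmooth_ic} for the modified set $\hat S=\{(\hat x_i,g_i,f_i)\}_{i\in I}$. For the subgradient-type inequality, the equality $\inner{g_j}{\hat x_i-\hat x_j}=\inner{g_j}{x_i-x_j}$ from~\eqref{eq:contracted_xi} immediately converts the inequality $f_i\geq f_j+\inner{g_j}{x_i-x_j}$ valid for $S$ into $f_i\geq f_j+\inner{g_j}{\hat x_i-\hat x_j}$, which is exactly what is needed. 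For the bounded-gradient condition, nothing is changed: the same subgradients $g_i$ appear in $\hat S$ as in $S$, so the bound $\norm{g_i}\leq M$ carries over trivially. Note that the contraction-type inequality $\norm{\hat x_i-\hat x_j}\leq \norm{x_i-x_j}$ is not even used here, reflecting the fact that $\mathcal{C}_M$ has no curvature or smoothness constraint tying $\norm{x_i-x_j}$ to other quantities.

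There is essentially no obstacle: the proof is a direct application of the interpolation theorem together with the preservation of the linear form $\inner{g_j}{x_i-x_j}$ under the transformation. The entire argument fits in a few lines, and can be concluded with \qed after invoking Theorem~\ref{thm:nonsmooth_ic} in both directions.
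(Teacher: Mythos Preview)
Your proposal is correct and follows essentially the same approach as the paper: both use the equality $\inner{g_j}{\hat x_i-\hat x_j}=\inner{g_j}{x_i-x_j}$ from~\eqref{eq:contracted_xi} to transfer the interpolation conditions~\eqref{eq:nonsmooth_ic} from $S$ to $\hat S$, noting that the gradient-norm bound is unchanged. The paper's proof is simply a more compressed version of your argument.
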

	\begin{proof}
		Let $S=\{(x_i,g_i,f_i)\}_{i\in I}$ be a $\mathcal{C}_M$-interpolable set and suppose $\hat S=\{(\hat x_i,g_i,f_i)\}_{i\in I}$ with $\hat x_i$ satisfying~\eqref{eq:contracted_xi}. Since the interpolation conditions~\eqref{eq:nonsmooth_ic} hold for $S$, it immediately follows from the equality relations in~\eqref{eq:contracted_xi} that these interpolation conditions also hold for $\hat S$, concluding the proof.
    \qed
	\end{proof}
	
\section{Analysis of a greedy first-order method}\label{sec:review}
	
	The goal of this section is to introduce a framework for studying the worst-case performance of the following subspace-search based greedy method, \AT{reminiscent of conjugate gradient methods}.
	
	\begin{oframed}
		\textbf{Greedy first-order method (GFOM)}
		\begin{itemize}
			\item[] Input: $f\in\mathcal{F}(\mathbb{R}^d)$, initial guess $x_0\in\mathbb{R}^d$, $N\in \mathbb{N}$. \medskip
			\item[] For $i=1,2,\hdots,N$:
			\begin{align}
			&\text{Set } x_{i}\in\ \underset{x\in \mathbb{R}^d}{\mathrm{argmin}} \left\{f(x):\; x\in x_0+\mathrm{span}\{f'(x_0),\hdots,f'(x_{i-1})\}\right\} \label{E:gfomstep}, \\
			&\text{Choose } f'(x_i)\in\partial f(x_i) \text{ such that } \inner{f'(x_i)}{f'(x_j)} = 0 \quad\text{for all } {0\leq j<i}, \label{E:gfom_grad_selection}
			\end{align}    
			\item[] Output: $\GFOM_{N}(f, x_0) := x_N$.
		\end{itemize}  
	\end{oframed}
	
GFOM clearly becomes intractable after the first few iterations; nevertheless, a few fundamental theoretical properties render its analysis interesting independently of the focus of the following sections.
One such property is that GFOM attains the best possible behavior that can be achieved by a first-order method on functions that have a form similar to the ``worst function in the world'' introduced by Nesterov~\cite{Book:Nesterov} (as a way of establishing lower-complexity bounds). GFOM is therefore a natural candidate when looking for ``the best algorithm in the world''.
Additionally, GFOM can be seen as a generalization of the Conjugate Gradient method~\cite{hestenes1952methods} which remains a very fruitful field of study to this day. 
	
	Note that the iteration rule~\eqref{E:gfomstep} is not well-defined when the function $f$ does not attain its infimum on the provided subspace. In such cases, \AT{GFOM} cannot proceed and we say that it does not yield an output.
	For cases where~\eqref{E:gfomstep} is well-defined, note that
	by the first-order optimality conditions, a choice for $f'(x_i)$ that satisfies the (Conjugate Gradient like) conditions~\eqref{E:gfom_grad_selection} necessarily exists, and in particular, when $f$ is differentiable at $x_i$ these requirements are fulfilled by the gradient of $f$ at $x_i$.

\subsection{Estimating the worst-case performance of \AT{GFOM}}\label{sec:peps}

The analysis below is based on the performance estimation methodology which was first introduced in~\cite{Article:Drori} and has been successfully applied to analyze methods in a wide range of settings, including smooth and nonsmooth minimization~\cite{drori2014optimal,kim2014optimized,kim2015convergence,taylor2015smooth}, proximal gradient methods~\cite{taylor2015exact,taylor2017pgm}, saddle-point problems~\cite{drori2014contributions} and more recently to operator splitting methods~\cite{ryu2018operator}.
Here we build upon an approach developed for the analysis of line-searching methods~\cite{de2016worst}, and improve it by providing a tightness proof under some mild conditions.

Clearly, a meaningful analysis can only be attained by making some assumptions on the structure of the problem: namely, that $f$ belongs to some given class of functions~$\mathcal{F}$ and that the initial point $x_0$ satisfies some conditions. In the sequel we restrict our attention to the standard initial condition on $\norm{x_0-\xopt}$, which we assume to be bounded by some constant $R_x>0$ (see Definition~\ref{def:input}). In addition, \AT{we evaluate} the performance of a method in terms of its worst-case absolute inaccuracy $f(x_N)-\fopt$. In other words, we are looking for worst-case guarantees of type
\[ f(x_N)-\fopt\leq \tau \norm{x_0-\xopt}^2,\]
with $\tau\geq 0$ being as small as possible. \YD{Note that the presented analysis allows for more general initial conditions and performance measures (see discussions in~\cite[Section 4]{nemirovsky1992information}, and more specifically in~\cite[Section 4]{taylor2017pgm} in the context of performance \AT{estimation)}, however, for the sake of simplicity we \AT{do not pursue this direction in this work}}.

We start the analysis of \AT{GFOM} with the observation that, under the assumptions discussed above, the worst-case performance of \AT{GFOM} is \emph{by definition} the optimal value to the following \emph{performance estimation problem} (PEP):
	\begin{align}
	\sup_{ f, x_0}&\ f(\GFOM_{N}(f, x_0))-\fopt \tag{PEP}\label{Intro:PEP} \\
	\text{{subject to:} }
	& (f, x_0) \text{ is an } (\mathcal{F}, R_x)\text{-input}. \notag 
	\end{align}
	As an immediate consequence of the definition of~\eqref{Intro:PEP}, if $(f, x_0)$ is such that $f\in\mathcal{F}$ and $\norm{x_0-x_*}\leq R_x$ (for some $x_*\in \argmin_x f(x)$) then the following bound hold:
	\begin{equation}\label{eq:pepbound}
	f(\GFOM_{N}(f, x_0)) - \fopt \leq \val\eqref{Intro:PEP}.
	\end{equation}

Although the form~\eqref{Intro:PEP} appears at first to be a purely theoretical notational reformulation, it provides a convenient framework for manipulating the problem in a way that will eventually yield a tractable bound on the worst-case performance of \AT{GFOM}.

As a first step \AT{for} obtaining tractable bounds we formulate\AT{~\eqref{Intro:PEP}} as a finite-dimensional optimization problem.

\begin{lemma}\label{lem:pep_for_gfom}
Let $N\in \mathbb{N}$, $R_x\geq 0$ and let $\mathcal{F}$ be a class of \ccp functions. 
Then for any $(\mathcal{F}, R_x)$-input $(f, x_0)$,
\begin{equation}\label{eq:gfom_dpep_bound}
	f(\GFOM_{N}(f, x_0)) - \fopt \leq \val \eqref{gfom_dPEP}
\end{equation}
	holds with
\begin{align}
	\sup_{\{(x_i,g_i,f_i)\}_{i\in \I}}&\ f_N-f_* \tag{PEP-GFOM}\label{gfom_dPEP}\\
	\text{{subject to:} }&  \{(x_i,g_i,f_i)\}_{i\in \I} \text{ is }\mathcal{F}\text{-interpolable},   \notag\\
	& \norm{x_0-x_*}\leq R_x, \notag\\
	& g_*=0, \notag \\
	&\inner{g_i}{g_j}= 0, \quad \text{for all } 0\leq j<i=1,\hdots N,\notag\\
	& \inner{g_i}{x_j-x_0}= 0, \quad \text{for all } 1\leq j \leq i=1,\hdots N.\notag
\end{align}
Furthermore, if $\mathcal{F}$ is contraction-preserving (see Definition~\ref{def:contraction}) then the bound~\eqref{eq:gfom_dpep_bound} is tight,
i.e., for any $\varepsilon>0$ there exists an
$(\mathcal{F}, R_x)$-input $(\hat f, \hat x_0)$, such that
\[
	\hat f(\GFOM_{N}(\hat f, \hat x_0)) - \hat f_* \geq \val \eqref{gfom_dPEP}-\varepsilon.
\]
\end{lemma}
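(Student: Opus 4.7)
The proof splits naturally into the upper bound direction \eqref{eq:gfom_dpep_bound}, obtained by recognizing any well-defined run of GFOM as a feasible solution to \eqref{gfom_dPEP}, and the tightness direction, obtained by reversing this correspondence via the contraction-preserving property to realize a near-optimal feasible solution as the trajectory of GFOM on some function in $\mathcal{F}$.

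For the upper bound, I would let $\{x_i\}_{i=0}^{N}$ be the iterates produced by GFOM on the given input $(f,x_0)$, set $g_i := f'(x_i)$ (chosen as in \eqref{E:gfom_grad_selection}) and $f_i := f(x_i)$, and pick $x_* \in \argmin f$ with $\|x_0-x_*\|\leq R_x$, $g_*=0$ and $f_*=\fopt$. The resulting tuple $\{(x_i,g_i,f_i)\}_{i\in\I}$ is $\mathcal{F}$-interpolable through $f$, and satisfies $g_*=0$, $\|x_0-x_*\|\leq R_x$, and the gradient orthogonality $\langle g_i,g_j\rangle=0$ for $j<i$ directly from \eqref{E:gfom_grad_selection}. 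The condition $\langle g_i,x_j-x_0\rangle=0$ for $1\leq j\leq i$ follows because $x_j-x_0\in\linspan\{g_0,\ldots,g_{j-1}\}\subseteq\linspan\{g_0,\ldots,g_{i-1}\}$ by the definition of the GFOM iterates, while first-order optimality of $x_i$ over $x_0+\linspan\{g_0,\ldots,g_{i-1}\}$ makes $g_i$ orthogonal to the entire span. Hence the tuple is feasible for \eqref{gfom_dPEP} with objective $f_N - f_*$, yielding \eqref{eq:gfom_dpep_bound}.

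For tightness, fix $\varepsilon>0$ and pick a feasible $\{(x_i,g_i,f_i)\}_{i\in\I}$ of \eqref{gfom_dPEP} with objective at least $\val\eqref{gfom_dPEP}-\varepsilon$. Let $P_k$ denote the orthogonal projection onto $V_k:=\linspan\{g_0,\ldots,g_{k-1}\}$ and propose the new iterates
\[
\hat x_0 := x_0,\quad \hat x_i := x_0 + P_i(x_i-x_0)\ \ (i=1,\ldots,N),\quad \hat x_* := x_0 + P_{N+1}(x_*-x_0).
\]
The key observation is that the PEP-GFOM constraint $\langle g_k,x_i-x_0\rangle=0$ for $1\leq i\leq k$, together with the pairwise orthogonality of the $g_k$'s, forces $P_i(x_i-x_0)=P_{N+1}(x_i-x_0)$, so that $\hat x_i-\hat x_j=P_{N+1}(x_i-x_j)$ uniformly over $i,j\in\I$. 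From this identity, the distance bound $\|\hat x_i-\hat x_j\|\leq\|x_i-x_j\|$ follows from nonexpansivity of projections, and $\langle g_k,\hat x_i-\hat x_j\rangle=\langle g_k,x_i-x_j\rangle$ follows because $g_k\in V_{N+1}$ for $k\in\{0,\ldots,N\}$ (the $k=*$ case being trivial since $g_*=0$). Hence \eqref{eq:contracted_xi} holds and Definition~\ref{def:contraction} yields $\hat f\in\mathcal{F}$ with $\hat f(\hat x_i)=f_i$ and $g_i\in\partial\hat f(\hat x_i)$ for all $i\in\I$.

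Finally, I would check that GFOM on $(\hat f, x_0)$ admits the trajectory $\hat x_0,\ldots,\hat x_N$ together with the subgradient selections $g_0,\ldots,g_N$. By construction $\hat x_i \in x_0 + V_i$, and $g_i\in\partial\hat f(\hat x_i)$ with $\langle g_i,g_j\rangle=0$ for $j<i$ ensures first-order optimality over $x_0+V_i$ and satisfies the selection rule \eqref{E:gfom_grad_selection}. Since $g_*=0\in\partial\hat f(\hat x_*)$ makes $\hat x_*$ a minimizer of $\hat f$ with value $f_*$, and $\|\hat x_0-\hat x_*\|=\|P_{N+1}(x_*-x_0)\|\leq\|x_0-x_*\|\leq R_x$, the pair $(\hat f, x_0)$ is an $(\mathcal{F},R_x)$-input on which GFOM achieves $f_N - f_* \geq \val\eqref{gfom_dPEP}-\varepsilon$. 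The main subtlety, which I expect to be the principal obstacle, is the choice of projection subspace for $\hat x_*$: since no PEP-GFOM constraint forces $\langle g_N,x_*-x_0\rangle=0$, taking the smaller subspace $V_N$ would break the identity $\langle g_N,\hat x_*-\hat x_N\rangle=\langle g_N,x_*-x_N\rangle$ required by \eqref{eq:contracted_xi}, which is why $\hat x_*$ must be projected onto the enlarged subspace $V_{N+1}$.
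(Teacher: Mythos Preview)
Your proposal is correct and follows essentially the same route as the paper's proof: the upper bound by exhibiting the GFOM trajectory as a feasible point of \eqref{gfom_dPEP}, and tightness by projecting a near-optimal feasible tuple onto the span of the gradients and invoking the contraction-preserving property to realize it as a GFOM run. The only cosmetic difference is in the handling of $\hat x_*$: the paper (via its Lemma~\ref{lem:lindep}) removes from $x_*-x_0$ only the components lying in $\linspan\{v_1,\dots,v_N\}$ (the residuals orthogonal to all $g_k$), whereas you project $x_*-x_0$ all the way onto $V_{N+1}$; both choices satisfy \eqref{eq:contracted_xi}, and your uniform use of $P_{N+1}$ makes the verification of $\langle g_k,\hat x_i-\hat x_j\rangle=\langle g_k,x_i-x_j\rangle$ slightly cleaner.
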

	
Problem~\eqref{gfom_dPEP} can be seen as a discretized version of~\eqref{Intro:PEP}, where the \AT{variable $f_i$ acts as the value of the function at $x_i$ and the variable $g_i$ acts as the gradient of the function at $x_i$}. In order to ensure that $x_*$ corresponds to an optimal value of $f$, the constraint $g_*=0$ is included.
In view of this interpretation, all the constraints in~\eqref{gfom_dPEP} are clearly necessary and follow directly from the properties of the problem and from the definition of GFOM, making~\eqref{gfom_dPEP} a relaxation of~\eqref{Intro:PEP} and therefore an upper bound on its value.
The main issue in establishing the tightness claim is to show that the variables $x_i$ fall in the span of the previous gradients (i.e., $x_i \in x_0 + \mathrm{span}\{g_0,\dots,g_{i-1}\}$ as required by~\eqref{E:gfomstep});
here the contraction-preserving assumption is used, allowing to ``project'' the variable $x_i$ on the span of the previous gradients without affecting the constraints or objective, thus showing that an optimal solution can be assumed to satisfy the required property~\eqref{E:gfomstep}. We postpone the formal proof of Lemma~\ref{lem:pep_for_gfom} to Appendix~\ref{sec:GFOMproof}.
	
In surprisingly many situations, the constraint `$\{(x_i,g_i,f_i)\}_{i\in \I} \text{ is } \mathcal{F}\text{-interpolable}$'  in~\eqref{gfom_dPEP} can be expressed as a finite set of inequalities that depend on the $\{(x_i,g_i,f_i)\}_{i\in \I}$ variables via \AT{linear combinations of $\{f_i\}_{i\in \I}$ and inner products of the vectors $\{(x_i, g_i)\}_{i\in \I}$}
	(for example, see~\eqref{eq:smooth_sc_ic} and~\eqref{eq:nonsmooth_ic} for the $\FmuL$ and $\mathcal{C}_M$ classes, respectively). In these cases, \eqref{gfom_dPEP} becomes a quadratically constrained quadratic program (QCQP), which has efficient SDP relaxations~\cite{beck2007quadratic,beck2012new}.
	We now consider such cases, and provide sufficient conditions under which these relaxations are exact.

	\subsection{A tractable bound on the worst-case performance of \AT{GFOM}}\label{sec:primalpep}
	
We begin by introducing the following notations:
given a set of triplets $S=\{(x_i,g_i,f_i)\}_{i\in \I}$ with $g_*=0$, let $P\in\mathbb{R}^{d\times (2N+2) }$ and $F\in\mathbb{R}^{N+2}$ be defined as containing the information collected after $N$ iterations as follows:
\begin{equation}\label{eq:coord_and_grads_gfom}
	\begin{array}{l}
	P=[\ g_0 \ g_1 \ \hdots \ g_N \ | \ x_1-x_0 \ \hdots \ x_N-x_0 \ | \ x_*-x_0\ ], \\
	F=[\ f_0 \ f_1 \ \hdots \ f_N \  f_* \ ]^\top.
	\end{array}
\end{equation}
	Further denote by $G \in \mathbb{R}^{(2N+2)\times (2N+2)}$ the corresponding positive semidefinite Gram matrix 
	\[
	G=P^{\top\!}P\succeq 0, 
	\]
	and by $\bx_i, \bg_i \in \mathbb{R}^{2N+2}$ and $\bfu_i\in \mathbb{R}^{N+2}$ the following zero and unit vectors
	\begin{alignat*}{2}
	\bx_0 &:= 0, \\
	\bx_i &:= e_{N+1+i}, \quad &&i=1,\dots,N,\\
	\bx_* &:= e_{2N+2}, \\
	\bg_* &:= 0, \\
	\bg_i &:= e_{i+1}, &&i=0,\dots,N,\\
	\bfu_i &:= e_{i+1}, &&i=0,\dots,N,\\
	\bfu_* &:= e_{N+2},
	\end{alignat*}
	which are defined such that $x_i-x_0=P(\bx_i-\bx_0)$, $g_i=P\bg_i$ and $f_i=F \bfu_i$.
	Using these notations together with the notations defined in Section~\ref{sec:notations}, the following reformulations hold:
	\begin{equation}\label{eq:sdp_vardef}
	\begin{aligned}
	\inner{g_i}{g_j}&=\inner{\bg_i}{\bg_j}_G= 0,\\
	\inner{g_i}{x_j-x_0}&=\inner{\bg_i}{\bx_j-\bx_0}_G= 0,\\
	\norm{x_0-x_*}^2&=\norm{\bx_0-\bx_*}_G^2.
	\end{aligned}
	\end{equation}

These notations allow us to encode the equality and inequality constraints in~\eqref{gfom_dPEP} within an SDP.
In order to encode the interpolation conditions, we \AT{further require the} class $\mathcal{F}(\Rd)$ \AT{to have} interpolation conditions that can be expressed as a set of affine constraints in the entries of the matrices $G$ and $F$ defined above.
\begin{assumption}\label{a:ic}
    The constraint `$\{(x_i,g_i,f_i)\}_{i\in \I} \text{ is } \mathcal{F}(\Rd)\text{-interpolable}$' can be encoded within an SDP. 
    I.e., there exists an index set $\K$ and an appropriate choice of matrices $\icA_k\in\mathbb{R}^{(2N+2)\times (2N+2)}$, vectors $\ica_k\in\mathbb{R}^{N+2}$ and scalars $\icb_k\in\mathbb{R}$ for all $k\in \K$, such that
	for any set of triples $S=\{(x_i,g_i,f_i)\}_{i\in \I}$,
	$S$ is $\mathcal{F}(\Rd)$-interpolable \emph{if and only if}
	\begin{equation}\label{E:icmatrixform}
	    \Tr {\icA_kG}+(\ica_k)^\top F+\icb_k\leq 0,  \quad\text{for all } k\in \K.
	\end{equation}
	 (The notation \emph{ic} above is an abbreviation of \emph{interpolation conditions}.)
\end{assumption}

	\begin{example}[Interpolation conditions for $\FmuL$]\label{ex:strongly_convex_ic}
		For the class of $L$-smooth and $\mu$-strongly convex functions, the index set $\K$ can be defined by $\K=\{(i,j):i,j\in \I\}$, and $\{(\icA_{k}, \ica_k, \icb_k)\}_{k\in \K}$ can be defined by:
		\begin{align*}
		\icA_{(i,j)}&=\bg_j\odot (\bx_i-\bx_j) + \frac{1}{2(1-\mu/L)} \biggl(\frac{1}{L}(\bg_i-\bg_j)\odot(\bg_i-\bg_j) \\&\qquad + \mu (\bx_i-\bx_j)\odot(\bx_i-\bx_j)-2\frac{\mu}{L} (\bx_j-\bx_i)\odot(\bg_j-\bg_i) \biggr),\\
		\ica_{(i,j)}&=\bfu_j-\bfu_i, \\
		\icb_{(i,j)} &= 0,
		\end{align*} 
		for all $(i,j)\in \K$ (see Theorem~\ref{thm:smooth_sc_ic}).
	\end{example}
	
	\begin{example}[Interpolation conditions for $\mathcal{C}_M$]\label{ex:nonsmooth_ic}
		Consider the class of Lipschitz \ccp functions whose gradients are bounded in norm by $M$ for some $0\leq M\leq \infty$.
		In this setting, there are two types of constraints encoding the interpolation conditions (see Theorem~\ref{thm:nonsmooth_ic}): constraints bounding the gradients, and constraints ensuring convexity.
		We therefore set
		$\K=\I \cup \{(i,j)\in \I\times \I: i\neq j\}$
		and define $\{(\icA_{k}, \ica_k, \icb_k)\}_{k\in \K}$ as follows:
		\begin{alignat*}{2}
		& \icA_i = \bg_i\odot \bg_i, \quad && i\in \I,\\
		& \ica_i = 0, \quad && i\in \I, \\
		& \icb_i = -M^2, \quad && i\in \I, \\
		& \icA_{(i,j)}= \bg_j\odot (\bx_i-\bx_j), \quad && i\neq j\in \I, \\
		& \ica_{(i,j)}= \bfu_j-\bfu_i, \quad && i\neq j\in \I,\\
		& \icb_{(i,j)}= 0, \quad && i\neq j\in \I.
		\end{alignat*}
	\end{example}
	
	We can now formulate a tractable performance estimation problem for \AT{GFOM}.
\begin{lemma}\label{lem:pep_sdp_relax}
Let $(f, x_0)$ be an $(\mathcal{F}(\Rd), R_x)$-input, $N\in \mathbb{N}$, and suppose $\{(\icA_{k}, \ica_k, \icb_k)\}_{k\in \K}$ encodes the interpolation conditions for $\mathcal{F}(\Rd)$ (see Assumption~\ref{a:ic}). Then
\begin{equation}\label{L:sdpBound}
	\val \eqref{gfom_dPEP} \leq \val\eqref{gfom_sdpPEP},
\end{equation}
where
\begin{align}
	&\sup_{ F\in\mathbb{R}^{N+1}, G\in\mathbb{R}^{2N+2\times 2N+2}} F^\top \bfu_N - F^\top \bfu_*  \tag{sdp-PEP-GFOM}\label{gfom_sdpPEP} \\
	& \begin{array}{lrl}
	\text{{subject to:} } 
	&\Tr {\icA_kG}+(\ica_k)^\top F+\icb_k\leq 0, & \quad\text{for all } k\in \K,\\
	&\inner{\bg_i}{\bg_j}_G= 0, &\quad\text{for all } 0\leq j<i=1,\hdots N,\\
	&\inner{\bg_i}{\bx_j-\bx_0}_G= 0, &\quad\text{for all } 1\leq j \leq i=1,\hdots N,\\
	&\norm{\bx_0-\bx_*}_G^2-R_x^2\leq 0, &\\
	& G\succeq 0.
	\end{array}\notag
\end{align}
Furthermore, if $d \geq 2N+2$ then~\eqref{L:sdpBound} holds with equality.
\end{lemma}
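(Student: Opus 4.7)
The plan is to prove both halves of the lemma by the standard Gram-matrix lifting argument, exhibiting explicit maps between feasible points of \eqref{gfom_dPEP} and \eqref{gfom_sdpPEP} that preserve the objective. The dimension hypothesis $d \geq 2N+2$ will only be needed for the reverse direction.

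For the inequality \eqref{L:sdpBound}, I would take any feasible $\{(x_i,g_i,f_i)\}_{i\in \I}$ of \eqref{gfom_dPEP}, assemble $P\in\mathbb{R}^{d\times (2N+2)}$ and $F\in\mathbb{R}^{N+2}$ exactly as in \eqref{eq:coord_and_grads_gfom}, and set $G := P^\top P \succeq 0$. The identities \eqref{eq:sdp_vardef} then translate the orthogonality constraints and the distance bound $\norm{x_0-x_*}\leq R_x$ verbatim into their counterparts in \eqref{gfom_sdpPEP}, while the ``only if'' direction of Assumption~\ref{a:ic} turns the $\mathcal{F}$-interpolability requirement into the inequalities $\Tr{\icA_k G}+(\ica_k)^\top F+\icb_k\leq 0$. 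Since $f_N-f_* = F^\top\bfu_N - F^\top\bfu_*$ by definition of $\bfu_N$ and $\bfu_*$, the pair $(G,F)$ is feasible in \eqref{gfom_sdpPEP} with the same objective value, yielding \eqref{L:sdpBound}.

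For the tightness claim when $d\geq 2N+2$, I would run the construction in reverse. Given feasible $(G,F)$ of \eqref{gfom_sdpPEP}, I would factor the positive semidefinite matrix $G\in\mathbb{R}^{(2N+2)\times(2N+2)}$ as $G=P_0^\top P_0$ with $P_0\in\mathbb{R}^{(2N+2)\times(2N+2)}$, and then view $P_0$ inside $\mathbb{R}^{d\times(2N+2)}$ by padding with $d-(2N+2)\geq 0$ zero rows---this is the unique step using the dimension assumption. Picking an arbitrary $x_0\in\Rd$ and defining $x_i:=x_0+P(\bx_i-\bx_0)$, $g_i:=P\bg_i$, and $f_i:=F^\top\bfu_i$ for $i\in\I$ then yields triplets with $g_*=0$ automatically, since $\bg_*=0$. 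The identities \eqref{eq:sdp_vardef} again take the non-interpolation constraints of \eqref{gfom_sdpPEP} to the corresponding constraints of \eqref{gfom_dPEP}, and the ``if'' direction of Assumption~\ref{a:ic} supplies an $f\in\mathcal{F}$ interpolating them (with $x_*\in\argmin f$ since $0\in\partial f(x_*)$). The resulting feasible point of \eqref{gfom_dPEP} has the same objective as $(G,F)$, completing the proof.

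I do not anticipate any real obstacle: both directions are routine Gram-matrix manipulations tailored to the way Assumption~\ref{a:ic} was set up. The only subtle step is the zero-padding in the reverse construction, which is precisely where the bound $d\geq 2N+2$ enters and is also the reason the equality claim can fail in low ambient dimensions. Importantly, and in contrast to Lemma~\ref{lem:pep_for_gfom}, no contraction-preserving hypothesis is needed here, because the equivalence being established is between two reformulations of the same relaxation rather than between the relaxation and the original \eqref{Intro:PEP}.
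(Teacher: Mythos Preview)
Your proposal is correct and follows essentially the same approach as the paper's own proof: both directions are handled by the Gram-matrix lifting $G=P^\top P$, with the reverse direction factoring a feasible $G$ into a $d\times(2N+2)$ matrix $P$ (which is possible precisely when $d\geq 2N+2$) and reading off the triplets via~\eqref{eq:coord_and_grads_gfom}. Your write-up is, if anything, slightly more explicit than the paper's about the zero-padding and about invoking both directions of Assumption~\ref{a:ic}, but the underlying argument is identical.
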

\begin{proof}
	Since any feasible solution $\{(x_i, g_i, f_i)\}_{i\in \I}$ to~\eqref{gfom_dPEP}  can be transformed to a feasible solution to~\eqref{gfom_sdpPEP}, by setting $G=P^{\top\!}P$, where $P$ is defined as in~\eqref{eq:coord_and_grads_gfom}, then~\eqref{L:sdpBound} immediately follows.
	
	Now, suppose $d \geq 2N+2$ and let $(F, G)$ be feasible to~\eqref{gfom_sdpPEP}. 
	Since $G$ is a $(2N+2)\times (2N+2)$ positive-semidefinite matrix, there exits some $d \times (2N+2)$ matrix $P$ such that $G=P^{\top\!}P$,
	and thus $(F, G)$ can be transformed to a feasible solution for~\eqref{gfom_dPEP} by assigning values for \AT{$\{(x_i, g_i, f_i)\}_{i\in \I}$} according to~\eqref{eq:coord_and_grads_gfom}. We have obtained
	\[
		\val\eqref{gfom_sdpPEP} \leq \val \eqref{gfom_dPEP},
	\]
	which completes the proof.
	\qed
\end{proof}

We now describe the final form of the bound on the performance of \AT{GFOM}, which is the standard Lagrangian dual of~\eqref{gfom_sdpPEP}. 
This bound \AT{provides} the basic building block for SSEP.

\begin{theorem}\label{thm:dual_pep_for_gfom}
Let $N\in \mathbb{N}$, $R_x\geq 0$, $\mathcal{F}(\Rd)$ a class of \ccp functions and suppose $\{(\icA_{k}, \ica_k, \icb_k)\}_{k\in \K}$ encodes the interpolation conditions for $\mathcal{F}$ (see Assumption~\ref{a:ic}). 
Then for any $(\mathcal{F}(\Rd), R_x)$-input $(f, x_0)$,
\begin{equation}\label{eq:thm1}
	f(\GFOM_{N}(f, x_0)) - \fopt \leq \val \eqref{gfom_dualPEP}
\end{equation}
holds with
\begin{align*}
	\inf_{\{\alpha_k\},\{\beta_{i,j}\},\{\gamma_{i,j}\},\tau_x} &\ \tau_x R_x^2-\sum_{k\in \K} \alpha_k \icb_k \tag{dual-PEP-GFOM}\label{gfom_dualPEP}\\
	\text{subject to: }&\alpha_k\geq 0,\ \tau_x\geq 0,\notag\\
	&\begin{aligned}
	\sum_{k\in \K}\alpha_k \icA_k&+\sum_{i=1}^N \bg_i\odot \left[\sum_{j=0}^{i-1}\beta_{i,j}\bg_j+\sum_{j=1}^i\gamma_{i,j}(\bx_j-\bx_0) \right]\\&+
	\tau_x[(\bx_0-\bx_*)\odot(\bx_0-\bx_*)]\succeq 0,
	\end{aligned}\notag \\
	& \bfu_N-\bfu_*-\sum_{k\in \K}\alpha_k \ica_k=0.\notag
\end{align*}
Furthermore, if $\mathcal{F}(\Rd)$ is contraction-preserving, $d\geq 2 N+2$ and there \AT{exists} some $(\mathcal{F}(\Rd), R_x)$-input $(\hat f, \hat x_0)$ such that $\GFOM_{2N+1}(\hat f, \hat x_0)$ is \emph{not} optimal for $\hat f$, then the bound~\eqref{eq:thm1} is tight.
\end{theorem}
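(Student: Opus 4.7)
The plan is to derive \eqref{gfom_dualPEP} as the standard Lagrangian dual of the SDP \eqref{gfom_sdpPEP} of Lemma~\ref{lem:pep_sdp_relax}, obtain \eqref{eq:thm1} via weak duality, and then establish the tightness claim by combining Lemmas~\ref{lem:pep_for_gfom} and~\ref{lem:pep_sdp_relax} with strong duality for the underlying SDP.

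For the bound~\eqref{eq:thm1}, I would introduce Lagrange multipliers $\alpha_k\geq 0$ for the interpolation inequalities~\eqref{E:icmatrixform}, scalar multipliers $\beta_{i,j}$ and $\gamma_{i,j}$ for the orthogonality equalities $\inner{\bg_i}{\bg_j}_G=0$ and $\inner{\bg_i}{\bx_j-\bx_0}_G=0$, and $\tau_x\geq 0$ for the initialization inequality $\norm{\bx_0-\bx_*}_G^2-R_x^2\leq 0$. Using the identity $\inner{u}{v}_G=\Tr{G(u\odot v)}$, the Lagrangian is affine in $F$ and linear in $G$: maximizing over $F\in\mathbb{R}^{N+2}$ forces the affine constraint $\bfu_N-\bfu_*-\sum_k \alpha_k \ica_k=0$, and maximizing over $G\succeq 0$ forces the matrix PSD constraint of~\eqref{gfom_dualPEP}. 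Weak SDP duality then yields $\val\eqref{gfom_sdpPEP}\leq \val\eqref{gfom_dualPEP}$, and chaining with Lemmas~\ref{lem:pep_for_gfom} and~\ref{lem:pep_sdp_relax} gives~\eqref{eq:thm1}.

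For the tightness claim, assume $\mathcal{F}(\Rd)$ is contraction-preserving and $d\geq 2N+2$. Lemma~\ref{lem:pep_for_gfom} then gives $\sup_{(f,x_0)}[f(\GFOM_N(f,x_0))-\fopt] = \val\eqref{gfom_dPEP}$, and Lemma~\ref{lem:pep_sdp_relax} gives $\val\eqref{gfom_dPEP} = \val\eqref{gfom_sdpPEP}$. It therefore suffices to prove strong duality, $\val\eqref{gfom_sdpPEP} = \val\eqref{gfom_dualPEP}$.

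The main obstacle is this strong duality, for which I would verify a Slater-type constraint qualification for~\eqref{gfom_sdpPEP}: exhibit a feasible pair $(F,G)$ with $G\succ 0$ and all inequality constraints strict. The hypothesis that $\GFOM_{2N+1}(\hat f, \hat x_0)$ is not optimal is used precisely here: running GFOM for $2N+1$ iterations on $\hat f$ in dimension $d\geq 2N+2$ yields $2N+2$ mutually orthogonal nonzero gradients, spanning a $(2N+2)$-dimensional subspace. From this rich structure I would build a feasible primal point by taking $g_0,\dots,g_N$ to be the first $N+1$ of these gradients, choosing iterate differences of the form $x_j-x_0 = \sum_{k<j}\mu_{j,k} g_k + v_j$ with each $v_j$ taken orthogonal to $\linspan\{g_0,\dots,g_N\}$, and selecting $x_*-x_0$ also outside this span; this preserves the equalities $\inner{\bg_i}{\bx_j-\bx_0}_G=0$ while making the $2N+2$ vectors linearly independent and hence $G\succ 0$. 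A small perturbation of the $f_i$ values and of $R_x$ then makes the interpolation and initialization inequalities strict, yielding Slater and hence strong duality. The delicate point is reconciling $G\succ 0$ with the equality constraints and with the interpolation conditions simultaneously, which is exactly what the $2N+2$ orthogonal directions provided by $\GFOM_{2N+1}$ make possible.
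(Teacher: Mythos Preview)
Your derivation of \eqref{gfom_dualPEP} as the Lagrangian dual of \eqref{gfom_sdpPEP} and the use of weak duality to obtain \eqref{eq:thm1} are correct and match the paper. Your reduction of the tightness claim to strong duality for \eqref{gfom_sdpPEP}, via Lemmas~\ref{lem:pep_for_gfom} and~\ref{lem:pep_sdp_relax}, and the plan to exhibit a Slater point are also exactly the paper's strategy.

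The gap is in your Slater construction. If you take the first $N+1$ GFOM gradients and then \emph{add} orthogonal components $v_j$ to the iterate differences, the resulting triples $(\hat x_j,g_j,f_j)$ need not be $\mathcal{F}$-interpolable: the constraints $\Tr{\icA_k G}+(\ica_k)^\top F+\icb_k\leq 0$ can fail once the distances $\norm{x_i-x_j}$ are enlarged (for $\FmuL$ with $\mu>0$, for instance, the term $\mu\norm{x_i-x_j}^2$ in~\eqref{eq:smooth_sc_ic} grows). The contraction-preserving hypothesis only lets you \emph{shrink} distances, not expand them, so it cannot repair this step. Perturbing the $f_i$ afterwards does not restore feasibility in general, and $R_x$ is a fixed problem parameter that you are not free to perturb; likewise $x_*$ is the minimizer of $\hat f$ and cannot be ``selected''.

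The paper avoids this difficulty by \emph{subsampling} rather than perturbing: it sets $\tilde x_i:=x_{2i}$, $\tilde g_i:=f'(x_{2i})$, $\tilde f_i:=f(x_{2i})$ for $i=0,\dots,N$, i.e., it uses the \emph{actual} even-indexed GFOM iterates of $\hat f$. Interpolability and the orthogonality equalities in \eqref{gfom_sdpPEP} then hold automatically because the data come from a genuine function in $\mathcal{F}$. The role of the $2N+1$ iterations is that $x_{2i}-x_0\in\linspan\{g_0,\dots,g_{2i-1}\}$ picks up components along the odd-indexed gradients, which are orthogonal to all $\tilde g_j=g_{2j}$; together with non-optimality of $\GFOM_{2N+1}(\hat f,\hat x_0)$ (so $x_*-x_0$ does not lie in the span of the previous directions) this yields linear independence of the $2N+2$ defining vectors and hence $\tilde G\succ 0$. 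In short, the odd-indexed gradients are exactly the extra directions you were trying to manufacture with the $v_j$, but obtained here without touching interpolability.
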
	

\begin{proof}
    The first part of the claim follows directly by establishing weak duality between~\eqref{gfom_dualPEP} and~\eqref{gfom_sdpPEP}. Indeed, \AT{one can use the following association between the constraints and dual variables along with the definition of Lagrange duality:}
		\begin{align}
		& \begin{array}{lrll}
		&\Tr {\icA_kG}+(\ica_k)^\top F+\icb_k\leq 0, &\quad\text{for all } k\in \K  &:\alpha_k,\\
		&\inner{\bg_i}{\bg_j}_G = 0, &\quad\text{for all } 0\leq j<i=1,\hdots N&:\beta_{i,j},\\
		&\inner{\bg_i}{\bx_j-\bx_0}_G = 0, &\quad\text{for all } 1\leq j \leq i=1,\hdots N\quad &:\gamma_{i,j},\\
		&\norm{\bx_0-\bx_*}_G^2-R_x^2\leq 0, & &:\tau_x.
		\end{array}\notag
		\end{align}
	
The proof for the tightness claim is presented in Appendix~\ref{sec:zeroGap}.
		\qed
\end{proof}
	
\begin{remark}
    Since \eqref{gfom_dualPEP} is an infimum problem, it has the useful property that any feasible solution to this problem corresponds to an upper bound on the worst-case accuracy of \AT{GFOM}. We take advantage of this property in the following, where bounds on the performance of \AT{GFOM} for different classes of problems are established by providing a \eqref{gfom_dualPEP}-feasible solution.
\end{remark}

\begin{remark}
    As an example of a function in $\FmuL(\Rd)$ that cannot be minimized by GFOM within $k<d$ iterations (as required by the tightness claim in the previous theorem) recall that for quadratic functions, the iterates of GFOM coincide with the iterates of the Conjugate Gradient method, hence by a well-known result, any quadratic form with $d$ distinct eigenvalues requires $d$ iterations to minimize (for a general starting point).
    In the non-smooth case, $\mathcal{C}_M(\Rd)$, one can take, for example,
    \[
        \hat f(x) = M \max(\langle x, e_1\rangle, \dots, \langle x, e_d\rangle, \|x\| - 1),
    \]
    with $x_0=0$ (where $e_i$ are the canonical unit vectors). For more details, see~\cite[Appendix A]{drori2014optimal}.
\end{remark}

To conclude this section, we note that although the analysis above was performed under Assumption~\ref{a:ic}, for classes of functions for which a set of interpolation conditions is either unknown or complex, the analysis can still proceed using a set of \emph{necessary} conditions for interpolability, with the only change being that tightness claims no longer apply.

\section{The subspace-search elimination procedure}\label{sec:gfom_to_fsfom}
	In this section, we introduce a technique for constructing first-order methods with a worst-case absolute inaccuracy that is guaranteed to be not worse than that of \AT{GFOM}.
	We begin by stating the main technical result, we then outline the SSEP technique, and finally demonstrate the application of the technique on several cases.
	
\begin{theorem}
    Let $N\in \mathbb{N}$, $R_x\geq 0$, $\mathcal{F}$ a class of \ccp functions for which Assumption~\ref{a:ic} holds, and let
	$(\{\tilde \alpha_k\}$, $\{\tilde \beta_{i,j}\}$, $\{\tilde\gamma_{i,j}\}, \tilde \tau_x)$
	be a feasible solution to~\eqref{gfom_dualPEP} that attains the \AT{objective value $\tilde \omega$.} 
	For any $(\mathcal{F}, R_x)$-input $(f, x_0)$,
	if $\{x_i\}$ is a sequence that satisfies
	\begin{equation}
	\inner{f'(x_i)}{\sum_{j=0}^{i-1}\tilde\beta_{i,j} f'(x_j) + \sum_{j=1}^{i} \tilde\gamma_{i,j}(x_j-x_0)}=0, \quad i=1,\hdots,N, \label{eq:newalgo_cstr}
	\end{equation}
	then the bound $f(x_N) - \fopt \leq \tilde\omega$ holds
	for any choice of $f'(x_i)\in \partial f(x_i)$.
	\label{thm:gfom_reconstruction}
\end{theorem}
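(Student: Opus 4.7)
The plan is to mimic the weak-duality calculation that underlies Theorem~\ref{thm:dual_pep_for_gfom}, while noticing that the dual multipliers $\tilde\beta_{i,j}$ and $\tilde\gamma_{i,j}$ enter the argument only through the single aggregated inner product $\inner{g_i}{\sum_{j=0}^{i-1}\tilde\beta_{i,j}g_j+\sum_{j=1}^{i}\tilde\gamma_{i,j}(x_j-x_0)}$ for each iteration $i=1,\dots,N$. Consequently, the orthogonality conditions that define GFOM can be relaxed to the much weaker algorithmic conditions~\eqref{eq:newalgo_cstr} without damaging the certificate.

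First I would instantiate the primal objects from the given sequence: set $g_i:=f'(x_i)$ and $f_i:=f(x_i)$ for $i=0,\dots,N$, together with $g_*:=0$ and $f_*:=\fopt$, so that $\{(x_i,g_i,f_i)\}_{i\in\I}$ is automatically $\mathcal{F}$-interpolable, as witnessed by $f$ itself. Assembling $P$ and $F$ as in~\eqref{eq:coord_and_grads_gfom} and putting $G:=P^{\top}P\succeq 0$, Assumption~\ref{a:ic} yields $\Tr{\icA_k G}+(\ica_k)^\top F+\icb_k\leq 0$ for every $k\in\K$, and the definition of an $(\mathcal{F},R_x)$-input gives $\norm{x_0-x_*}\leq R_x$.

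Next I would run the standard dual combination: multiplying each interpolation inequality by $\tilde\alpha_k\geq 0$ and summing produces
\[ \sum_{k\in\K}\tilde\alpha_k\Tr{\icA_k G}+\Big(\sum_{k\in\K}\tilde\alpha_k \ica_k\Big)^{\!\top} F+\sum_{k\in\K}\tilde\alpha_k \icb_k\leq 0,\]
and dual feasibility $\bfu_N-\bfu_*=\sum_k\tilde\alpha_k\ica_k$ rewrites the middle term as $f_N-f_*$. Separately, testing the dual LMI in~\eqref{gfom_dualPEP} against $G\succeq 0$ and invoking the identity $\inner{u}{v}_A=\Tr{A(u\odot v)}$ yields
\[ \sum_{k\in\K}\tilde\alpha_k\Tr{\icA_k G}+\sum_{i=1}^N\inner{g_i}{\sum_{j=0}^{i-1}\tilde\beta_{i,j}g_j+\sum_{j=1}^{i}\tilde\gamma_{i,j}(x_j-x_0)}+\tilde\tau_x\norm{x_0-x_*}^2\geq 0.\]

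The decisive step is the observation that each inner product in the middle sum vanishes by hypothesis~\eqref{eq:newalgo_cstr}, irrespective of which subgradient $f'(x_i)\in\partial f(x_i)$ was selected. Hence $\sum_k\tilde\alpha_k\Tr{\icA_k G}\geq -\tilde\tau_x\norm{x_0-x_*}^2\geq -\tilde\tau_x R_x^2$ (using $\tilde\tau_x\geq 0$), and substituting back gives $f(x_N)-\fopt\leq \tilde\tau_x R_x^2-\sum_k\tilde\alpha_k \icb_k=\tilde\omega$, as claimed. The only mildly delicate point is matching the block of the LMI indexed by $\tilde\beta,\tilde\gamma$ with the left-hand side of~\eqref{eq:newalgo_cstr}; once this identification is made, the argument reduces to the standard weak-duality template and requires neither the contraction-preserving assumption nor the dimensionality hypothesis $d\geq 2N+2$ used for tightness in Theorem~\ref{thm:dual_pep_for_gfom}.
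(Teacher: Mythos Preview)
Your argument is correct. It is the direct weak-duality computation: build the Gram matrix $G$ from the actual iterates, pair the dual LMI with $G$, observe that the cross-terms collapse to the aggregated inner products~\eqref{eq:newalgo_cstr} and hence vanish, and read off the bound. Nothing is missing.

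The paper reaches the same conclusion by a slightly different route. Rather than testing the dual certificate against a single primal instance, it first proves an abstract constraint-aggregation lemma (any equality constraints of a primal can be linearly combined using the corresponding dual multipliers without lowering the dual bound), then formulates a new performance-estimation SDP, \eqref{ssep_dPEP}, for the whole class of methods satisfying~\eqref{eq:newalgo_cstr}, and finally bounds that SDP by $\tilde\omega$ via the aggregation lemma. Your proof short-circuits this machinery: you never write down \eqref{ssep_dPEP} or invoke an aggregation principle, you simply evaluate the dual certificate on the concrete $(G,F)$. The paper's detour buys a little extra: it exhibits an explicit SDP upper-bounding the worst case of \emph{every} method in the class~\eqref{eq:newalgo_cstr}, which is conceptually in line with the PEP philosophy. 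Your approach is shorter and more elementary, and makes it transparent that the only property of $\tilde\beta,\tilde\gamma$ that matters is the vanishing of the $N$ aggregated inner products.
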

		
The proof, presented in Appendix~\ref{sec:SSEP}, is based on the observation that by carefully aggregating the constraints in~\eqref{gfom_dualPEP}, it is possible to reach a PEP for methods satisfying~\eqref{eq:newalgo_cstr} without adversely affecting the optimal value of the PEP.
	
	Based on this result, the design procedure can be summarized as follows.
	\begin{oframed}
		\textbf{Subspace-search elimination procedure (SSEP)}
		\begin{enumerate}
			\item Choose $N\geq 0$, $R_x>0$ and a set of interpolation conditions $\{(\icA_{k}, \ica_k, \icb_k)\}_{k\in \K}$ for $\mathcal{F}$.
			\item Find a feasible solution $(\{\tilde \alpha_k\}, \{\tilde \beta_{i,j}\},$ $\{\tilde\gamma_{i,j}\}, \tilde \tau_x)$ to~\eqref{gfom_dualPEP}, and denote the objective value of the solution by $\tilde \omega$.
			\item Find a method satisfying~\eqref{eq:newalgo_cstr}.
			\item[$\rightarrow$] The worst-case absolute inaccuracy at its $N$th iterate is guaranteed to be at most $\tilde \omega$.
		\end{enumerate}
	\end{oframed}
	
\begin{remark}\label{rem:ssep_bound_discuss}
	Theorem~\ref{thm:gfom_reconstruction} states that any point feasible to~\eqref{gfom_dualPEP} can be used as a basis for constructing new methods with worst-case performances that are bounded by the value of the objective at that feasible point. In particular, this applies to an optimal solution of~\eqref{gfom_dualPEP}, hence in cases where~\eqref{gfom_dualPEP} attains the worst-case performance of \AT{GFOM} (e.g., under the tightness conditions in Theorem~\ref{thm:dual_pep_for_gfom}), the worst-case performance of any method constructed according to~\eqref{eq:newalgo_cstr} from an optimal solution is guaranteed to be equal to or better than the worst-case performance of \AT{GFOM}.
\end{remark}
	
Equality~\eqref{eq:newalgo_cstr} can be enforced in different ways. Perhaps the most straightforward one by an appropriate fixed-step size policy as detailed below.

\begin{corollary}\label{cor:ssepfixedstep}
Let $N\in \mathbb{N}$, $R_x\geq 0$, $\mathcal{F}$ a class of \ccp functions for which Assumption~\ref{a:ic} holds, and let
$(\{\tilde \alpha_k\}$, $\{\tilde \beta_{i,j}\}$, $\{\tilde\gamma_{i,j}\}, \tilde \tau_x)$
be a feasible solution to~\eqref{gfom_dualPEP} that attains the \AT{objective value $\tilde \omega$}  and satisfies $\tilde\gamma_{i,i}\neq 0$ for $i=1,\dots,N$.
For any $(\mathcal{F}, R_x)$-input $(f, x_0)$,
if $\{x_i\}$ is defined by
\begin{align}
		x_i=x_0 -\sum_{j=1}^{i-1} \frac{\tilde\gamma_{i,j}}{\tilde\gamma_{i,i}}(x_j-x_0)-\sum_{j=0}^{i-1}\frac{\tilde\beta_{i,j}}{\tilde\gamma_{i,i}} f'(x_j), \quad i=1,\hdots,N,\label{eq:reconstructed_algorithm}
\end{align}
then the bound $f(x_N) - \fopt \leq \tilde\omega$ holds
for any choice of $f'(x_i)\in \partial f(x_i)$.
\end{corollary}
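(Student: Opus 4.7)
The plan is to reduce Corollary~\ref{cor:ssepfixedstep} directly to Theorem~\ref{thm:gfom_reconstruction}. The strategy is simply to verify that the explicit recursion~\eqref{eq:reconstructed_algorithm} is an algebraic rearrangement of a strengthening of the orthogonality condition~\eqref{eq:newalgo_cstr}, so that the hypothesis of the theorem holds automatically for any subgradient choice.

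First, I would take the recursion
\[
x_i = x_0 - \sum_{j=1}^{i-1} \frac{\tilde\gamma_{i,j}}{\tilde\gamma_{i,i}}(x_j-x_0) - \sum_{j=0}^{i-1}\frac{\tilde\beta_{i,j}}{\tilde\gamma_{i,i}} f'(x_j),
\]
move $x_0$ to the left-hand side, and multiply through by $\tilde\gamma_{i,i}$ (which is nonzero by assumption). Collecting the term $\tilde\gamma_{i,i}(x_i-x_0)$ with the remaining sum over $j=1,\dots,i-1$ into a single sum up to $j=i$ yields
\[
\sum_{j=1}^{i} \tilde\gamma_{i,j}(x_j-x_0) + \sum_{j=0}^{i-1}\tilde\beta_{i,j} f'(x_j) = 0 \quad \text{in } \Rd.
\]
Thus the vector appearing as the second argument of the inner product in~\eqref{eq:newalgo_cstr} vanishes identically.

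Second, since the zero vector is orthogonal to every subgradient $f'(x_i)\in\partial f(x_i)$, condition~\eqref{eq:newalgo_cstr} is trivially satisfied by the sequence defined in~\eqref{eq:reconstructed_algorithm}, regardless of how the subgradient $f'(x_i)$ is selected. Applying Theorem~\ref{thm:gfom_reconstruction} to the given feasible tuple $(\{\tilde\alpha_k\},\{\tilde\beta_{i,j}\},\{\tilde\gamma_{i,j}\},\tilde\tau_x)$ then yields the desired bound $f(x_N)-\fopt \leq \tilde\omega$.

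There is no real obstacle here; the corollary is essentially a restatement of the theorem under the extra non-degeneracy assumption $\tilde\gamma_{i,i}\neq 0$, which is exactly what is needed to solve the linear relation~\eqref{eq:newalgo_cstr} for $x_i$ in closed form. The only thing worth flagging in the write-up is that~\eqref{eq:reconstructed_algorithm} realizes the condition in a strong sense (the bracketed combination is zero, not merely orthogonal to $f'(x_i)$), which is why no subspace search is required and why the bound holds for an arbitrary subgradient selection.
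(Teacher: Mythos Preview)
Your proposal is correct and follows exactly the same approach as the paper: the paper's proof simply states that under $\tilde\gamma_{i,i}\neq 0$, any sequence satisfying~\eqref{eq:reconstructed_algorithm} also satisfies~\eqref{eq:newalgo_cstr}, and then invokes Theorem~\ref{thm:gfom_reconstruction}. Your write-up merely spells out the algebraic rearrangement that the paper leaves implicit, correctly noting that the bracketed combination in~\eqref{eq:newalgo_cstr} is in fact the zero vector, so the orthogonality holds for any subgradient choice.
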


\begin{proof}
	Under the assumption that $\tilde\gamma_{i,i}\neq 0$, any sequence satisfying~\eqref{eq:reconstructed_algorithm} also satisfies~\eqref{eq:newalgo_cstr} and hence Theorem~\ref{thm:gfom_reconstruction} directly applies.\qed
\end{proof}
	
	\begin{remark}
		Since the optimization variables $\gamma_{i,i}$ in~\eqref{gfom_sdpPEP} are the dual variables to the constraints  ``$\inner{g_i}{x_i-x_0}=0$'' that define how \AT{GFOM} takes advantage of the first-order information at $x_i$,
		the condition $\tilde\gamma_{i,i}\neq 0$ in the previous corollary appears to be a natural requirement in this setting, and
		indeed, this condition is fulfilled in all cases treated in the sequel.
		See also~\cite[Theorem~3]{Article:Drori} for a similar condition that arises in a related context.
	\end{remark}

	In addition to the fixed-step method defined in Corollary~\ref{cor:ssepfixedstep}, there are additional strategies for enforcing~\eqref{eq:newalgo_cstr}, where, in particular, the iterates of \AT{GFOM} satisfy these equalities. As described in the examples below,	this flexibility allows the construction of methods that have additional properties, such as independence on problem parameters (e.g., Lipschitz constants or initial distance to optimality $R_x$).

	\subsection{Example: non-smooth convex minimization}\label{ex:ns_gfom}

	Consider the problem of minimizing a non-smooth convex function
	\[\min_{x\in\mathbb{R}^d} f(x),\]
	with $f\in\mathcal{C}_{M}$ (i.e., $f$ is convex with $\|f'(x)\|\leq M$ for all $x\in \Rd$ and for all $f'(x)\in \partial f(x)$) and under the assumption that the distance between the initial point and an optimal point $\norm{x_0 - \xopt}$ is bounded by a constant $R_x$.

	We start the SSEP by establishing a worst-case bound on GFOM in this case.
	As discussed above, for this purpose it is sufficient to find a dual feasible solution to~\eqref{gfom_dPEP} when the interpolation conditions used matches the class of functions under consideration.
	
\begin{lemma}\label{lem:nonsmoothfeasible}
	The following assignment is feasible to~\eqref{gfom_dualPEP} under the interpolation conditions for~$\mathcal{C}_{M}$ defined in Example~\ref{ex:nonsmooth_ic},
	and attain the objective value of $\frac{MR_x}{\sqrt{N+1}}$:
	\begin{equation}\label{eq:dual_feas_nsmooth}
	\begin{aligned}
	& \alpha_i=\frac{R_x}{2M(N+1)^{3/2}}, \quad i=0,\hdots,N,\\
	&   \begin{aligned}
		& \alpha_{i-1,i}=\frac{i}{N+1}, && i=1,\hdots,N;
		&& \alpha_{*,i}=\frac{1}{N+1}, && i=0,\hdots,N, \\
		& \gamma_{i,i}=\frac{i+1}{N+1}, && i=1,\hdots,N;
		&& \gamma_{i,i-1}=\frac{-i}{N+1}, && i=2,\hdots,N,
		\end{aligned}\\
	& \beta_{i,j}=\frac{R_x}{M(N+1)^{3/2}}, \quad i=1,\hdots,N,\ j=0,\hdots,i-1,\\
	& \tau_x=\frac{M}{2R_x\sqrt{N+1}}, \\
	\end{aligned}
	\end{equation}
	where all the other optimization variables in~\eqref{gfom_dualPEP} are set to zero.
\end{lemma}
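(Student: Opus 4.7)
The plan is to verify that the assignment~\eqref{eq:dual_feas_nsmooth} meets each of the three feasibility requirements of~\eqref{gfom_dualPEP} in turn, and then compute the objective value. Nonnegativity of the listed $\alpha_k$'s and of $\tau_x$ is immediate by inspection since $M,R_x>0$ (the $\gamma_{i,j}$'s are unconstrained in sign). The linear constraint $\bfu_N - \bfu_* - \sum_k \alpha_k \ica_k = 0$ is a telescoping identity: using $\ica_{(i,j)} = \bfu_j - \bfu_i$ and $\ica_i = 0$, the $\alpha_{i-1,i}$ terms contribute $\tfrac{1}{N+1}\bigl(N\bfu_N - \sum_{i=0}^{N-1}\bfu_i\bigr)$, the $\alpha_{*,i}$ terms contribute $\tfrac{1}{N+1}\sum_{i=0}^{N}\bfu_i - \bfu_*$, and the total collapses to $\bfu_N - \bfu_*$.

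The semidefinite constraint is the main step. The plan is to expand the LMI matrix $S$ term by term and observe that $\gamma_{i,i}$ and $\gamma_{i,i-1}$ are calibrated precisely to annihilate all cross-products $\bg_i \odot \bx_j$ (with $j\in\{1,\dots,N\}$) coming from the convexity multipliers. Specifically, using $\bx_0 = 0$: for $i\ge 2$, the $\alpha_{i-1,i}$-contribution $\tfrac{i}{N+1}\bg_i \odot \bx_{i-1}$ is cancelled exactly by the $\gamma_{i,i-1}$-term $-\tfrac{i}{N+1}\bg_i\odot(\bx_{i-1}-\bx_0)$; and for $i\ge 1$, the contributions to $\bg_i\odot \bx_i$ from $\alpha_{i-1,i}$, $\alpha_{*,i}$ and $\gamma_{i,i}$ sum to $\bigl(-\tfrac{i}{N+1}-\tfrac{1}{N+1}+\tfrac{i+1}{N+1}\bigr)\bg_i\odot \bx_i=0$. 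After these cancellations, only terms of the form $\bg_i\odot\bg_j$ and $\bg_i\odot\bx_*$ remain from the first three groups of multipliers, alongside $\tau_x(\bx_0-\bx_*)^{\odot 2}$.

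The quadratic-in-$\bg$ contributions then package as a perfect square: with $\alpha_i=\tfrac{R_x}{2M(N+1)^{3/2}}$ on the diagonal and $\beta_{i,j}=\tfrac{R_x}{M(N+1)^{3/2}}$ off-diagonal, they combine into $\tfrac{R_x}{2M(N+1)^{3/2}}\Sigma\odot\Sigma$ where $\Sigma:=\sum_{i=0}^N\bg_i$. The $\alpha_{*,i}$ contributions assemble into $\tfrac{1}{N+1}\Sigma\odot\bx_* = -\tfrac{1}{N+1}\Sigma\odot\Delta$ with $\Delta := \bx_0-\bx_*$, and the radius term contributes $\tfrac{M}{2R_x\sqrt{N+1}}\Delta\odot\Delta$, so
\[
S \;=\; \tfrac{R_x}{2M(N+1)^{3/2}}\,\Sigma\odot\Sigma \;-\; \tfrac{1}{N+1}\,\Sigma\odot\Delta \;+\; \tfrac{M}{2R_x\sqrt{N+1}}\,\Delta\odot\Delta.
\]

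The final step is to verify that the coefficients are tuned so that $S$ is rank-one PSD: a direct computation gives $\tfrac{R_x}{2M(N+1)^{3/2}}\cdot\tfrac{M}{2R_x\sqrt{N+1}}=\tfrac{1}{4(N+1)^2}$, which is exactly the square of half the $\Sigma\odot\Delta$ coefficient, so $S = v\odot v\succeq 0$ with $v = \sqrt{\tfrac{R_x}{2M(N+1)^{3/2}}}\,\Sigma \;-\; \sqrt{\tfrac{M}{2R_x\sqrt{N+1}}}\,\Delta$. The objective $\tau_x R_x^2 - \sum_k\alpha_k\icb_k$ evaluates by substitution to $\tfrac{MR_x}{2\sqrt{N+1}}+\tfrac{MR_x}{2\sqrt{N+1}} = \tfrac{MR_x}{\sqrt{N+1}}$. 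The main obstacle throughout is the bookkeeping behind the cross-term cancellations in $S$; once these are isolated, the PSD check reduces to a two-dimensional AM-GM identity and the rest is routine arithmetic.
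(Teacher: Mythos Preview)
Your proof is correct and follows essentially the same approach as the paper: verify nonnegativity by inspection, check the linear constraint by a telescoping computation, show that the LMI collapses to the rank-one matrix $\tau_x\bigl(\Delta-\tfrac{R_x}{M\sqrt{N+1}}\Sigma\bigr)^{\odot 2}$ (your $v\odot v$ is the same matrix written with a different normalization), and evaluate the objective. The paper simply asserts the rank-one identity and says it ``may require some work to obtain directly,'' whereas you spell out the $\gamma$--$\alpha$ cross-term cancellations explicitly; this is a presentational difference, not a methodological one.
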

For the sake of coherence, the following proof relies on the SDP formalism developed above. One can note, though, that it is possible to reformulate the proof below using equivalent sum-of-squares arguments.
	\begin{proof}
		Since the inequality constraint in \eqref{gfom_dualPEP} clearly holds for \eqref{eq:dual_feas_nsmooth}, it is enough to verify the positive-semidefinite constraint and the equality constraint, i.e.:
		\begin{align*}
		&\begin{aligned}
		\sum_{i=0}^N\alpha_i \icA_i+\sum_{i\neq j\in \I}\alpha_{i,j} \icA_{i,j}&+\sum_{i=1}^N \bg_i\odot \left[\sum_{j=0}^{i-1}\beta_{i,j}\bg_j+\sum_{j=1}^i\gamma_{i,j}(\bx_j-\bx_0) \right] +\tau_x[(\bx_0-\bx_*)\odot(\bx_0-\bx_*)]\succeq 0,
		\end{aligned} \\
		&\bfu_N-\sum_{i=0}^N\alpha_i \ica_i-\sum_{i\neq j\in \I}\alpha_{i,j} \ica_{i,j}=0.
		\end{align*}
		Substituting with \eqref{eq:dual_feas_nsmooth} and the definition of $\icA$ and $\ica$, we reach
		\begin{align*}
		& \frac{R_x}{2M(N+1)^{3/2}} \sum_{i=0}^N \bg_i\odot \bg_i + \sum_{i=1}^N \frac{i}{N+1} \bg_i\odot (\bx_{i-1}-\bx_i) + \frac{1}{N+1} \sum_{i=0}^N \bg_i\odot (\bx_*-\bx_i) \\&\qquad + \sum_{i=1}^N \bg_i\odot \left[ \frac{R_x}{M(N+1)^{3/2}} \sum_{j=0}^{i-1}\bg_j+\frac{-i}{N+1}(\bx_{i-1}-\bx_0)+\frac{i+1}{N+1}(\bx_i-\bx_0) \right] \\&\qquad +\frac{M}{2R_x\sqrt{N+1}}[(\bx_0-\bx_*)\odot(\bx_0-\bx_*)]\succeq 0, \\
		& \bfu_N-\frac{1}{N+1}\sum_{i=0}^N(\bfu_i-\bfu_*)-\sum_{i=1}^N \frac{i}{N+1} (\bfu_i-\bfu_{i-1})=0.
		\end{align*}
		
		The first condition can be verified by showing that it is equal to the following positive-semidefinite rank-one matrix
		(this may require some work to obtain directly, but can easily be verified by developing both expressions): 
		\[
		\frac{M}{2R_x\sqrt{N+1}}\left(\bx_0-\bx_*-\frac{1}{\sqrt{N+1}}\frac{R_x}{M}\sum_{i=0}^{N}\bg_i\right)\odot\left(\bx_0-\bx_*-\frac{1}{\sqrt{N+1}}\frac{R_x}{M}\sum_{i=0}^{N}\bg_i\right).
		\]
		The \AT{equality} is straightforward to verify.	Finally, the objective value is given by
		\begin{align*}
		& \tau_x R_x^2-\sum_{k\in \K} \alpha_k \icb_k = \frac{M}{2R_x\sqrt{N+1}} R_x^2 +\sum_{i=0}^N \frac{R_x}{2M(N+1)^{3/2}} M^2 = \frac{M R_x}{\sqrt{N+1}}.
		\end{align*}\qed
	\end{proof}
	
	The following is now immediate from Theorem~\ref{thm:dual_pep_for_gfom}, Proposition~\ref{prop:nonsmoothcp} and Theorem~\ref{thm:gfom_reconstruction}.
\begin{corollary}\label{cor:gfom_subgrad}
Let $(f, x_0)$ be a $(\mathcal{C}_M, R_x)$-input for some $M, R_x\geq 0$.
\begin{enumerate}
\item For any $N\in \mathbb{N}$
    \[ 
	    f(\GFOM_N(f, x_0))-\fopt\leq \frac{M R_x}{\sqrt{N+1}}.
    \]
    Furthermore, this bound is tight when $d\geq 2N+2$.
\item For any sequence $x_1,\dots,x_N$ that satisfies
    \begin{equation}\label{eq:rec_fs_gfom_nonsmooth}
        \inner{f'(x_i)}{x_i-\left[\frac{i}{i+1}x_{i-1}+\frac{1}{i+1}x_0-\frac{1}{i+1}\frac{R_x}{M\sqrt{N+1}}\sum_{j=0}^{i-1}f'(x_j)\right]}=0, \quad i=1,\hdots,N,
    \end{equation}
we have
\[ 
    f(x_N)-\fopt\leq \frac{M R_x}{\sqrt{N+1}}.
\]
\end{enumerate}
\end{corollary}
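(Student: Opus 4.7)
The plan is to chain together the three ingredients the corollary advertises---Lemma~\ref{lem:nonsmoothfeasible}, Theorem~\ref{thm:dual_pep_for_gfom} (applied to $\mathcal{C}_M$), and Theorem~\ref{thm:gfom_reconstruction}. Since Lemma~\ref{lem:nonsmoothfeasible} already exhibits a feasible point of~\eqref{gfom_dualPEP} with objective value $MR_x/\sqrt{N+1}$, most of the work is upstream, and I only need to glue the pieces together and do one bookkeeping calculation. I note first that $\mathcal{C}_M$ admits the interpolation conditions in Example~\ref{ex:nonsmooth_ic}, so Assumption~\ref{a:ic} is in force throughout.

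For part~(1), Theorem~\ref{thm:dual_pep_for_gfom} immediately gives $f(\GFOM_N(f,x_0)) - \fopt \leq \val\eqref{gfom_dualPEP}$, and weak duality against the feasible point of Lemma~\ref{lem:nonsmoothfeasible} yields $\val\eqref{gfom_dualPEP} \leq MR_x/\sqrt{N+1}$. For the tightness claim under $d \geq 2N+2$, I invoke Proposition~\ref{prop:nonsmoothcp} so that $\mathcal{C}_M$ is contraction-preserving, and exhibit an input on which GFOM fails to reach optimality within $2N+1$ iterations---the max-type worst function $\hat f(x)=M\max(\langle x,e_1\rangle,\dots,\langle x,e_d\rangle,\|x\|-1)$ with $\hat x_0=0$, mentioned in the remark following Theorem~\ref{thm:dual_pep_for_gfom}, plays this role. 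Theorem~\ref{thm:dual_pep_for_gfom} then certifies that the upper bound is attained exactly.

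For part~(2), I apply Theorem~\ref{thm:gfom_reconstruction} with $\tilde \omega = MR_x/\sqrt{N+1}$ and with the dual variables from Lemma~\ref{lem:nonsmoothfeasible}. The only remaining task is to verify that~\eqref{eq:rec_fs_gfom_nonsmooth} is exactly~\eqref{eq:newalgo_cstr} written out in these coordinates. Since only $\tilde\gamma_{i,i}=(i+1)/(N+1)$, $\tilde\gamma_{i,i-1}=-i/(N+1)$ (for $i\geq 2$), and $\tilde\beta_{i,j}=R_x/(M(N+1)^{3/2})$ are nonzero, the second argument of the inner product in~\eqref{eq:newalgo_cstr} equals
\[
\tfrac{R_x}{M(N+1)^{3/2}}\textstyle\sum_{j=0}^{i-1} f'(x_j) + \tfrac{i+1}{N+1}(x_i-x_0) - \tfrac{i}{N+1}(x_{i-1}-x_0),
\]
which, after dividing through by the positive scalar $\tilde\gamma_{i,i}=(i+1)/(N+1)$, is precisely $x_i$ minus the bracket in~\eqref{eq:rec_fs_gfom_nonsmooth}. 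A separate sanity check at $i=1$ (where $\sum_{j=1}^i \tilde\gamma_{i,j}(x_j-x_0)$ has the single term $\tilde\gamma_{1,1}(x_1-x_0)$) confirms that the $i=1$ instance of~\eqref{eq:rec_fs_gfom_nonsmooth} is also recovered.

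There is no serious obstacle here: the hard content---the dual feasibility certificate of Lemma~\ref{lem:nonsmoothfeasible}, the tightness argument underlying Theorem~\ref{thm:dual_pep_for_gfom}, and the reconstruction machinery of Theorem~\ref{thm:gfom_reconstruction}---has all been established earlier. The only step requiring any care is the mechanical identification of the abstract constraint~\eqref{eq:newalgo_cstr} with the concrete recurrence~\eqref{eq:rec_fs_gfom_nonsmooth}, including the boundary case $i=1$.
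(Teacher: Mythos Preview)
Your proposal is correct and mirrors the paper's own argument, which declares the corollary immediate from Theorem~\ref{thm:dual_pep_for_gfom}, Proposition~\ref{prop:nonsmoothcp}, and Theorem~\ref{thm:gfom_reconstruction} (together with the feasible point of Lemma~\ref{lem:nonsmoothfeasible}). You spell out the steps more explicitly than the paper does---in particular the identification of~\eqref{eq:newalgo_cstr} with~\eqref{eq:rec_fs_gfom_nonsmooth} and the boundary check at $i=1$---but the underlying logic is identical.
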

	
	As noted above, sequences satisfying \eqref{eq:rec_fs_gfom_nonsmooth} can be generated in several ways. We start by describing an efficient implementation of the fixed-step scheme described in Corollary~\ref{cor:ssepfixedstep}.
	\begin{oframed}
		\textbf{SSEP-based subgradient method}
		\begin{itemize}
			\item[] Input: $f\in\mathcal{C}_{M}(\mathbb{R}^d)$, initial guess $x_0\in\mathbb{R}^d$ such that $\norm{x_0-x_*}\leq R_x$, number of iterations~$N$. \medskip
			\item[] For $i=1,\hdots,N$:
			\begin{align*}
			&y_{i}=\frac{i}{i+1}x_{i-1}+\frac{1}{i+1}x_0\\
			&d_i= \frac{1}{i+1} \sum_{j=0}^{i-1} f'(x_j) \\
			&x_{i}=y_i - \frac{R_x}{M\sqrt{N+1}} d_i
			\end{align*}
			\item[] Output: $x_N$.
		\end{itemize}
	\end{oframed}
	
\begin{corollary}
Let $(f, x_0)$ be a $(\mathcal{C}_M, R_x)$-input for some $M, R_x\geq 0$.	If $x_N$ is an output of the SSEP-based subgradient method given $f$, $x_0$ and $N\in \mathbb{N}$, then
\[ 
    f(x_N)-\fopt\leq \frac{MR_x}{\sqrt{N+1}}.
\]
\end{corollary}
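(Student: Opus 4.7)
The plan is to observe that the SSEP-based subgradient method is a direct implementation of the fixed-step scheme from Corollary~\ref{cor:ssepfixedstep} applied to the feasible dual solution of Lemma~\ref{lem:nonsmoothfeasible}, so the bound follows by essentially just invoking the second part of Corollary~\ref{cor:gfom_subgrad}.

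More concretely, the first step is to substitute the definitions of $y_i$ and $d_i$ into $x_i = y_i - \frac{R_x}{M\sqrt{N+1}} d_i$ and rewrite the update as
\begin{equation*}
x_i = \frac{i}{i+1} x_{i-1} + \frac{1}{i+1} x_0 - \frac{1}{i+1}\frac{R_x}{M\sqrt{N+1}} \sum_{j=0}^{i-1} f'(x_j), \quad i=1,\dots,N.
\end{equation*}
The second step is to rearrange this identity as
\begin{equation*}
x_i - \left[\frac{i}{i+1} x_{i-1} + \frac{1}{i+1} x_0 - \frac{1}{i+1}\frac{R_x}{M\sqrt{N+1}} \sum_{j=0}^{i-1} f'(x_j)\right] = 0,
\end{equation*}
which makes the inner product in~\eqref{eq:rec_fs_gfom_nonsmooth} vanish trivially (regardless of the choice of subgradient $f'(x_i)\in\partial f(x_i)$). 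The third and final step is to invoke the second part of Corollary~\ref{cor:gfom_subgrad} to conclude $f(x_N)-\fopt \leq \frac{MR_x}{\sqrt{N+1}}$.

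There is essentially no obstacle here: all the heavy lifting has already been done in Lemma~\ref{lem:nonsmoothfeasible} (constructing the explicit dual certificate) and Theorem~\ref{thm:gfom_reconstruction} / Corollary~\ref{cor:gfom_subgrad} (turning the certificate into a worst-case guarantee for any method satisfying the orthogonality condition~\eqref{eq:newalgo_cstr}). The only task at this stage is a one-line algebraic check that the prescribed update enforces this orthogonality in the strongest possible way, namely by annihilating the second argument of the inner product. If anything deserves brief comment, it is just to emphasize that this holds \emph{for any} valid subgradient selection $f'(x_i)\in\partial f(x_i)$, so the method does not require a specific oracle choice.
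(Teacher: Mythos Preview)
Your proposal is correct and matches the paper's own proof, which simply states that the claim follows directly from the second part of Corollary~\ref{cor:gfom_subgrad}. Your write-up merely spells out the one-line algebraic check that the paper leaves implicit.
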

\begin{proof}
	The claim then follows directly from the second part of Corollary~\ref{cor:gfom_subgrad}.\qed
\end{proof}
	
	Note though that the SSEP-based subgradient method has a guarantee on its \emph{last} iterate, whereas the guarantees on standard subgradient methods are usually either on the averaged iterate $f\left(\frac{1}{N+1}\sum_{i=0}^Nx_i\right)-\fopt$ or in terms of the best iterate $\min_{0\leq i\leq N} f(x_i)-\fopt$. Interestingly, the SSEP-based subgradient method appears to be similar to the so-called quasi-monotone subgradient methods~\cite{nesterov2015quasi}.
	
	As noted above, there are additional ways of enforcing equality~\eqref{eq:newalgo_cstr}, allowing the introduction of methods with different properties.
	As an example, one can subsume prior knowledge of the constants $R_x$, $M$ and $N$ by using an exact line search procedure, as demonstrated by the following optimal subgradient method.
	\begin{oframed}
		\textbf{SSEP-based subgradient method with an exact line search}
		\begin{itemize}
			\item[] Input: $f\in\mathcal{C}_{M}(\mathbb{R}^d)$, initial guess $x_0\in\mathbb{R}^d$. \medskip
			\item[] For $i=1,2,\hdots$:
			\begin{align*}
			&y_{i}=\frac{i}{i+1}x_{i-1}+\frac{1}{i+1}x_0\\
			&d_i= \frac{1}{i+1} \sum_{j=0}^{i-1} f'(x_j) \\
			&\alpha\in\argmin_{\alpha\in\real} f(y_i-\alpha d_i)\\
			&x_{i}=y_i-\alpha d_i \\
			&\text{Choose }f'(x_i)\in\partial f(x_i)\text{ such that } \inner{f'(x_i)}{d_{i}}=0
			\end{align*}
		\end{itemize}  
	\end{oframed}
	
\begin{corollary}\label{cor:nsmooth_ELS}
Let $(f, x_0)$ be an $(\mathcal{C}_M, R_x)$-input for some $M, R_x\geq 0$.
For any sequence $\{x_i\}$ generated by SSEP-based subgradient method with an exact line search given $f$ and $x_0$
\[ 
    f(x_N)-\fopt\leq \frac{MR_x}{\sqrt{N+1}}, \quad \forall N\in \mathbb{N}.
\]
\end{corollary}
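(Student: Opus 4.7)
The plan is to reduce this corollary to the second part of Corollary~\ref{cor:gfom_subgrad} by showing that the iterates produced by the line-search variant satisfy the orthogonality relation~\eqref{eq:rec_fs_gfom_nonsmooth} for \emph{every} $N\in\mathbb{N}$, no matter what the actual step length happens to be. The key observation is that the recurrence~\eqref{eq:rec_fs_gfom_nonsmooth} depends on the iterates only through the quantity $x_i - y_i$, where $y_i=\frac{i}{i+1}x_{i-1}+\frac{1}{i+1}x_0$ is exactly the first auxiliary point built in the line-search scheme, and through $d_i=\frac{1}{i+1}\sum_{j=0}^{i-1}f'(x_j)$, again already defined in the scheme.

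First I would fix an arbitrary $N\in\mathbb{N}$ and, viewing the first $N$ iterates of the method as the sequence whose last entry we want to bound, rewrite~\eqref{eq:rec_fs_gfom_nonsmooth} in the equivalent form
\[
\inner{f'(x_i)}{\,(x_i-y_i)+\tfrac{R_x}{M\sqrt{N+1}}\,d_i\,}=0,\quad i=1,\ldots,N.
\]
Next I would substitute $x_i=y_i-\alpha_i d_i$ (with $\alpha_i$ the step length returned by the exact line search at iteration $i$), reducing the left-hand side to
\[
\left(\tfrac{R_x}{M\sqrt{N+1}}-\alpha_i\right)\inner{f'(x_i)}{d_i},
\]
so the whole requirement collapses to $\inner{f'(x_i)}{d_i}=0$.

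To justify that the subgradient selection step in the algorithm can indeed produce such an $f'(x_i)$, I would invoke the first-order optimality condition of the scalar problem $\min_\alpha f(y_i-\alpha d_i)$: by convexity of $f$ and optimality of $\alpha_i$, the subdifferential of $\alpha\mapsto f(y_i-\alpha d_i)$ at $\alpha_i$ contains $0$, which is exactly the statement that $\inner{f'(x_i)}{d_i}=0$ for some $f'(x_i)\in\partial f(x_i)$. This is the only spot where one has to be a little careful, and it is the step I expect to be the main (though mild) obstacle, because one must rule out pathological cases by observing that, when the line search is ill-posed (e.g.\ $f$ unbounded along the ray), the objective is already $-\infty$ and the bound holds trivially, while when the minimum is attained, the optimality condition supplies the desired subgradient.

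Once the orthogonality is established, the iterates satisfy~\eqref{eq:rec_fs_gfom_nonsmooth} with the prescribed constants $R_x$, $M$ and~$N$, so the second item of Corollary~\ref{cor:gfom_subgrad} immediately yields $f(x_N)-\fopt\leq \frac{MR_x}{\sqrt{N+1}}$. Since $N$ was arbitrary, the bound holds for every $N\in\mathbb{N}$, concluding the proof.
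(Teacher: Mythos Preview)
Your proposal is correct and follows essentially the same route as the paper's own proof: rewrite~\eqref{eq:rec_fs_gfom_nonsmooth} as $\inner{f'(x_i)}{(x_i-y_i)+\tfrac{R_x}{M\sqrt{N+1}}d_i}=0$, substitute $x_i=y_i-\alpha_i d_i$, and use the optimality condition of the line search to obtain $\inner{f'(x_i)}{d_i}=0$, after which Corollary~\ref{cor:gfom_subgrad} applies. Your extra remarks on the well-posedness of the line search and on $N$ being arbitrary are consistent with the paper's (terser) treatment.
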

	\begin{proof} 
		First, note that from the first-order optimality conditions on the exact line search procedure, for all $i$ there exist $f'(x_i)\in\partial f(x_i)$ that satisfies the requirement $\inner{f'(x_i)}{d_{i}}=0$.
		Now, from definition of $x_i$ and $y_i$, we have
		\begin{align*}
		& \inner{f'(x_i)}{x_i-\left[\frac{i}{i+1}x_{i-1}+\frac{1}{i+1}x_0-\frac{1}{i+1}\frac{R_x}{M\sqrt{N+1}}\sum_{j=0}^{i-1}f'(x_j)\right]}\\
		& =\inner{f'(x_i)}{x_{i}-y_i+\frac{R_x}{M\sqrt{N+1}} d_i} =\inner{f'(x_i)}{-\alpha d_i+\frac{R_x}{M\sqrt{N+1}} d_i} =0,
		\end{align*} 
		which concludes the proof, since this establishes~\eqref{eq:rec_fs_gfom_nonsmooth} as required by Corollary~\ref{cor:gfom_subgrad}.
		\qed
	\end{proof}
	
	\begin{remark}
	In the setting considered in this section, it is known that no first-order method can have a worst-case absolute inaccuracy behavior that better than 
	\begin{equation}
	f(x_N)-\fopt\geq \frac{MR_x}{\sqrt{N+1}},\label{eq:lb_nsmooth}
	\end{equation}
	after $N$ iterations when $d\geq N+2$~\cite[Theorem 2]{drori2014optimal}, hence the subgradient algorithms described above are optimal and the corresponding bounds are tight.
    \end{remark}

	Note that the methods developed above are not the first subgradient schemes that attains the optimal worst-case complexity for non-smooth minimization, as it is  achieved, for example by the optimal step-size policy $\frac{R_x}{M\sqrt{N+1}}$ proposed in~\cite[Section 3.2.3]{Book:Nesterov} and by the Kelley-like cutting plane method developed in~\cite{drori2014optimal}.

	\subsection{Example: smooth convex minimization}\label{ex:gfom_ogm}
	In this section we demonstrate the application of SSEP on the problem of minimizing a smooth convex function. We show that \AT{GFOM} attains the best possible worst-case bound on this problem, and that the resulting fixed-step method is the optimized gradient method (OGM) developed in~\cite{Article:Drori,kim2014optimized}.
	Finally, we construct a method that has the same worst-case performance, but does not require prior knowledge on the problem parameters, at the cost of performing a one-dimensional line search at each iteration.

	We use the following notations:
	\begin{align*}
	\theta_{0,N} &:= 1, \\
	\theta_{i+1,N} &:= \frac{1+\sqrt{4\theta_{i,N}^2+1}}{2},\quad i=0,1,\dots,N-1,\\
	\theta_{N,N} &:= \frac{1+\sqrt{8\theta_{N-1,N}^2+1}}{2}.
	\end{align*}

\begin{lemma}\label{lem:smoothfeasible}
The following assignment is feasible to~\eqref{gfom_dualPEP} under the interpolation conditions for~$\mathcal{F}_{0,L}$ defined in Example~\ref{ex:strongly_convex_ic}, and \AT{attains} the objective value of $\frac{LR_x^2}{2\theta_{N,N}^2}$:
\begin{alignat*}{3}
	&\alpha_{i-1,i}=\frac{2\theta_{i-1,N}^2}{\theta_{N,N}^2},\ \ & i=1,\dots,N;\quad
	&\alpha_{*,i}=\frac{2\theta_{i,N}}{\theta_{N,N}^2},\ \ & i=0,\dots,N-1;\quad
	& \alpha_{*,N}=\frac{1}{\theta_{N,N}}, \\
	& \gamma_{i,i-1}=\frac{-2\theta_{i-1, N}^2}{\theta_{N,N}^2},\ \ & i=2,\dots,N; \quad
	& \gamma_{i,i}=\frac{2\theta_{i,N}^2}{\theta_{N,N}^2},\ \ & i=1,\dots,N-1;\quad
	&\gamma_{N,N}=1, \\
	&\tau_x=\frac{L}{2\theta_{N,N}^2},
\end{alignat*}
and
\begin{align*}
	&\beta_{i,j}=\frac1L\frac{4\theta_{i,N}\theta_{j, N}}{\theta_{N,N}^2}, && i=2,\dots,N-1,\ j=0,\dots,i-2,\\
	&\beta_{i,i-1}=\frac1L\frac{2\theta_{i-1, N}^2+4\theta_{i-1, N}\theta_{i,N}}{\theta_{N,N}^2}, && i=1,\dots,N-1, \\
	&\beta_{N,i}=\frac1L\frac{2\theta_{i,N}}{\theta_{N,N}}, && i=0,\dots,N-2, \\
	&\beta_{N,N-1}=\frac1L\left(\frac{2\theta_{N-1, N}}{\theta_{N,N}}+\frac{2\theta_{N-1, N}^2}{\theta_{N,N}^2}\right), \\
\end{align*}
where all the other optimization variables in~\eqref{gfom_dualPEP} are set to zero.
\end{lemma}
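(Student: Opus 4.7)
The plan is to follow the same blueprint used in the proof of Lemma~\ref{lem:nonsmoothfeasible}: there are three things to check (non-negativity, the linear equality on the $\bfu_i$ coefficients, and the matrix positive-semidefinite condition), plus a direct computation of the objective value. Non-negativity of all the $\tilde\alpha_k$ and of $\tilde\tau_x$ is immediate from the formulas, since the $\theta_{i,N}$ are positive. The objective value computation is also immediate, because for the class $\mathcal{F}_{0,L}$ one has $\icb_k=0$ (see Example~\ref{ex:strongly_convex_ic}), so $\tilde\omega = \tilde\tau_x R_x^2 = \frac{L R_x^2}{2\theta_{N,N}^2}$, as claimed.

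For the linear equality $\bfu_N-\bfu_*-\sum_k \alpha_k \ica_k = 0$, I would simply substitute $\ica_{(i,j)}=\bfu_j-\bfu_i$ and collect the coefficient of each $\bfu_i$ (for $i\in\I$). Only the $\alpha_{i-1,i}$ and $\alpha_{*,i}$ terms appear, so the coefficient of $\bfu_i$ is a simple two- or three-term combination of the given $\theta$ ratios; using the defining identity $\theta_{i,N}^2-\theta_{i,N}=\theta_{i-1,N}^2$ for $i=1,\dots,N-1$ and $\theta_{N,N}^2-\theta_{N,N}=2\theta_{N-1,N}^2$ (equivalent reformulations of the two recursions defining $\theta_{i,N}$), each such combination telescopes to zero for $i\neq N,*$ and produces the coefficients $+1$ and $-1$ for $\bfu_N$ and $\bfu_*$, respectively.

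The main obstacle is verifying the positive-semidefinite constraint. Mimicking the non-smooth case, the natural guess is that after substituting $\icA_{(i,j)}=\bg_j\odot(\bx_i-\bx_j)+\tfrac{1}{2L}(\bg_i-\bg_j)\odot(\bg_i-\bg_j)$ (valid for $\mu=0$) the whole expression collapses to a single rank-one symmetric outer product of the form
\begin{equation*}
\frac{L}{2\theta_{N,N}^2}\,v\odot v, \qquad v:=\bx_0-\bx_*-\frac{1}{L}\sum_{i=0}^{N-1}\frac{2\theta_{i,N}}{\theta_{N,N}}\bg_i-\frac{1}{L}\frac{1}{\theta_{N,N}}\bg_N,
\end{equation*}
the vector $v$ being reminiscent of the residual appearing in the analysis of OGM. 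To confirm this, I would expand $v\odot v$ and match terms with those coming from $\sum_k \tilde\alpha_k\icA_k+\sum_i\bg_i\odot[\sum_j\tilde\beta_{i,j}\bg_j+\sum_j\tilde\gamma_{i,j}(\bx_j-\bx_0)]$. The $(\bx_0-\bx_*)\odot(\bx_0-\bx_*)$ coefficient is already accounted for by $\tilde\tau_x$. Matching the coefficients of $(\bx_0-\bx_*)\odot\bg_i$ uses the $\tilde\alpha_{*,i}$ terms, matching $(\bx_j-\bx_0)\odot\bg_i$ uses $\tilde\alpha_{i-1,i}$ and $\tilde\gamma_{i,j}$, and matching the $\bg_i\odot\bg_j$ coefficients uses $\tilde\beta_{i,j}$ together with the squared-gradient contributions from the $\icA$'s. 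Each of these identifications reduces, after the substitution, to an algebraic identity among the $\theta_{i,N}$; all of them follow from the two recursions above. The endpoint $i=N$ is slightly different from $i<N$, which explains the special formulas for $\tilde\alpha_{*,N}$, $\tilde\gamma_{N,N}$ and $\tilde\beta_{N,i}$ and is where the identity $\theta_{N,N}^2-\theta_{N,N}=2\theta_{N-1,N}^2$ (rather than $\theta_{i,N}^2-\theta_{i,N}=\theta_{i-1,N}^2$) is required.

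I expect the only real work to be the bookkeeping in the rank-one identification: there are $O(N^2)$ monomials to match, but they are organized by $\bx$-$\bx$, $\bx$-$\bg$ and $\bg$-$\bg$ blocks, and within each block the matching reduces to the same two $\theta$-identities. Once the rank-one decomposition $\frac{L}{2\theta_{N,N}^2}v\odot v\succeq0$ is established, the PSD constraint is verified for free, and the lemma follows.
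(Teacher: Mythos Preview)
Your plan coincides with the paper's proof: check non-negativity, verify the linear equality on the $\bfu$-coefficients via the two $\theta$-recursions, and show that the matrix on the left of the PSD constraint collapses to a single rank-one outer product; the objective value then drops out as $\tau_xR_x^2$.

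There is, however, a concrete slip in the rank-one certificate you wrote down. With your $v$, the coefficient of $(\bx_0-\bx_*)\odot\bg_i$ in $\frac{L}{2\theta_{N,N}^2}\,v\odot v$ is
\[
\frac{L}{2\theta_{N,N}^2}\cdot 2\cdot\Bigl(-\tfrac{2\theta_{i,N}}{L\theta_{N,N}}\Bigr)\;=\;-\frac{2\theta_{i,N}}{\theta_{N,N}^3}\qquad(i<N),
\]
whereas the only contribution to this monomial on the left-hand side is $-\alpha_{*,i}=-\tfrac{2\theta_{i,N}}{\theta_{N,N}^2}$; the two do not match (and similarly for $i=N$). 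The $\bg$-coefficients in your $v$ are too small by a factor $\theta_{N,N}$. The vector that works---and is precisely the one used in the paper---is
\[
v\;=\;\bx_0-\bx_*-\frac{2}{L}\sum_{i=0}^{N-1}\theta_{i,N}\,\bg_i-\frac{\theta_{N,N}}{L}\,\bg_N.
\]
With this $v$, all the $\bx$--$\bx$, $\bx$--$\bg$ and $\bg$--$\bg$ blocks match exactly as you outlined, using only $\theta_{i,N}^2-\theta_{i,N}=\theta_{i-1,N}^2$ for $i<N$ and $\theta_{N,N}^2-\theta_{N,N}=2\theta_{N-1,N}^2$ at the endpoint.
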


\begin{proof}
	As in the non-smooth case, it is enough to show that the following constraints hold:
	\begin{align*}
	&\begin{aligned}
	\sum_{i\neq j\in \I}\alpha_{i,j} \icA_{i,j}&+\sum_{i=1}^N \bg_i\odot \left[\sum_{j=0}^{i-1}\beta_{i,j}\bg_j+\sum_{j=1}^i\gamma_{i,j}(\bx_j-\bx_0) \right]+
	\tau_x[(\bx_0-\bx_*)\odot(\bx_0-\bx_*)]\succeq 0,
	\end{aligned} \\
	&\bfu_N-\sum_{i\neq j\in \I}\alpha_{i,j} \ica_{i,j}=0.
	\end{align*}
	
	With some effort, one can show that the first expression above can be written as:
	\[
	\frac{L}{2\theta_{N,N}^2} \left(\bx_0-\bx_*-\frac{\theta_{N,N}}{L}\bg_N-\frac{2}{L}\sum_{i=0}^{N-1}\theta_{i,N} \bg_i\right)\odot \left(\bx_0-\bx_*-\frac{\theta_{N,N}}{L}\bg_N-\frac{2}{L}\sum_{i=0}^{N-1}\theta_{i,N} \bg_i\right),
	\]
	which is clearly a PSD matrix.
	As the second equality above also holds, it follows that the selected values define a feasible point.
	Finally, the objective value that corresponds to that point is given by
	\[
	\tau_x R_x^2 = \frac{L R_x^2}{2\theta_{N,N}^2}.
	\]
	\qed
\end{proof}
	
From Theorem~\ref{thm:dual_pep_for_gfom} Proposition~\ref{prop:smoothcp} and Theorem~\ref{thm:gfom_reconstruction} we get the following result.
\begin{corollary}\label{cor:gfom_grad}
    Let $(f, x_0)$ be an $(\FL, R_x)$-input for some $L, R_x\geq 0$.
    \begin{enumerate}
    \item For any $N\in \mathbb{N}$
    	\[ 
    		f(\GFOM_N(f,x_0))-\fopt\leq \frac{LR_x^2}{2\theta_{N,N}^2}.
    	\]
    	Furthermore, this bound is tight when $d\geq 2N+2$.
    \item For any sequence $x_1,\dots,x_N$ that satisfies
    	\begin{equation}\label{eq:rec_fs_gfom_smooth}
    	\begin{aligned}
    	    & \inner{g_i}{x_i-\left[\left(1-\frac{1}{\theta_{i,N}}\right)\left(x_{i-1}-\frac1L f'(x_{i-1})\right)+\frac{1}{\theta_{i,N}}\left(x_0-\frac{2}{L}\sum_{j=0}^{i-1}\theta_{j, N} f'(x_j)\right)\right]}=0,\quad i=1,\hdots,N,
    	\end{aligned} 
    	\end{equation}
    	we have
    	\[ 
    	    f(x_N)-\fopt\leq \frac{LR_x^2}{2\theta_{N,N}^2}.
        \]
    \end{enumerate}
\end{corollary}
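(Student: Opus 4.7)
The plan is to assemble the three tools named in the paragraph just before the corollary: the dual-feasible point of~\eqref{gfom_dualPEP} constructed in Lemma~\ref{lem:smoothfeasible}, the weak-duality bound (and tightness claim) in Theorem~\ref{thm:dual_pep_for_gfom}, and the reconstruction statement in Theorem~\ref{thm:gfom_reconstruction}. Contraction-preservation of $\FL$, needed for the tightness assertion, is supplied by Proposition~\ref{prop:smoothcp} specialized to $\mu=0$.

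For the first bullet, I would apply Theorem~\ref{thm:dual_pep_for_gfom} to $\mathcal{F}=\FL$ (whose interpolation conditions are recorded in Example~\ref{ex:strongly_convex_ic}) and plug in the specific feasible point of Lemma~\ref{lem:smoothfeasible}. Since that point evaluates the objective of~\eqref{gfom_dualPEP} at $\tfrac{LR_x^2}{2\theta_{N,N}^2}$, weak duality gives $f(\GFOM_N(f,x_0))-\fopt\le\tfrac{LR_x^2}{2\theta_{N,N}^2}$. For tightness when $d\ge 2N+2$, I would invoke the second part of Theorem~\ref{thm:dual_pep_for_gfom}: Proposition~\ref{prop:smoothcp} supplies contraction-preservation, and the non-optimality hypothesis is met by exhibiting a quadratic with at least $2N+2$ distinct eigenvalues, for which, as noted in the remark after Theorem~\ref{thm:dual_pep_for_gfom}, GFOM reduces to conjugate gradients and cannot terminate before $d$ steps from a generic starting point.

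For the second bullet, I would apply Theorem~\ref{thm:gfom_reconstruction} with the same feasible point. The theorem guarantees that any sequence $\{x_i\}$ satisfying the orthogonality relation~\eqref{eq:newalgo_cstr} built from the dual variables $\tilde\beta_{i,j},\tilde\gamma_{i,j}$ of Lemma~\ref{lem:smoothfeasible} enjoys the bound $\tfrac{LR_x^2}{2\theta_{N,N}^2}$. What remains is the algebraic verification that, after dividing~\eqref{eq:newalgo_cstr} by $\tilde\gamma_{i,i}\neq 0$ and regrouping $x_i$ on the left, the bracket coincides with the one appearing in~\eqref{eq:rec_fs_gfom_smooth}; this amounts to checking coefficient by coefficient that $-\tilde\gamma_{i,i-1}/\tilde\gamma_{i,i}=1-1/\theta_{i,N}$ and that $\tilde\beta_{i,j}/\tilde\gamma_{i,i}$ matches the coefficient of $f'(x_j)$ prescribed by~\eqref{eq:rec_fs_gfom_smooth}.

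The main obstacle, which I expect to be the longest part of the proof, is precisely this coefficient-matching. For $1\le i\le N-1$ it follows from the identity $\theta_{i-1,N}^2=\theta_{i,N}(\theta_{i,N}-1)$ implied by the recursion $\theta_{i,N}=(1+\sqrt{4\theta_{i-1,N}^2+1})/2$; for $i=N$ the analogous identity $2\theta_{N-1,N}^2=\theta_{N,N}(\theta_{N,N}-1)$, coming from the modified recursion $\theta_{N,N}=(1+\sqrt{8\theta_{N-1,N}^2+1})/2$, simultaneously takes care of the $(x_{N-1}-x_0)$ coefficient and of the special $\tilde\beta_{N,N-1}$ formula. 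Everything else is direct substitution of the values listed in Lemma~\ref{lem:smoothfeasible}.
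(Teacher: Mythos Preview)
Your proposal is correct and follows exactly the route the paper takes: the paper's proof is the single sentence ``From Theorem~\ref{thm:dual_pep_for_gfom}, Proposition~\ref{prop:smoothcp} and Theorem~\ref{thm:gfom_reconstruction} we get the following result,'' and you have simply spelled out how those three ingredients are combined, including the coefficient-matching (via the identities $\theta_{i-1,N}^2=\theta_{i,N}^2-\theta_{i,N}$ and $2\theta_{N-1,N}^2=\theta_{N,N}^2-\theta_{N,N}$) that the paper leaves implicit.
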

	
As an immediate application of Corollary~\ref{cor:ssepfixedstep}, we recover the optimized gradient method (OGM), which was developed in~\cite{Article:Drori,kim2014optimized}.
\begin{oframed}
	\textbf{Optimized gradient method (OGM)~\cite{kim2014optimized}}
	\begin{itemize}
		\item[] Input: $f\in\mathcal{F}_{0,L}(\mathbb{R}^d)$, initial guess $x_0\in\mathbb{R}^d$, number of iterations $N$. \medskip
		\item[] For $i=1,\hdots,N$:
		\begin{align*}
		y_i&=\left(1-\frac{1}{\theta_{i,N}}\right)x_{i-1}+\frac{1}{\theta_{i,N}}x_0\\
		d_i&=\left(1-\frac{1}{\theta_{i,N}}\right)f'(x_{i-1})+\frac{2}{\theta_{i,N}}\sum_{j=0}^{i-1} \theta_{j, N} f'(x_j)\\
		x_{i}&=y_i- \frac{1}{L} d_i
		\end{align*}
		\item[] Output: $x_N$.
	\end{itemize}
\end{oframed}
	
As in the non-smooth case, one can trade the knowledge of the problem parameters with an exact line search procedure, resulting in an optimized gradient method with exact line search.
\newpage
	\begin{oframed}
		\textbf{Optimized gradient method with exact line search (OGM-LS)}
		\begin{itemize}
			\item[] Input: $f\in\mathcal{F}_{0,L}(\mathbb{R}^d)$, initial guess $x_0\in\mathbb{R}^d$, number of iterations $N$. \medskip
			\item[] For $i=1,\hdots,N$: 
			\begin{align*}
			y_i&=\left(1-\frac{1}{\theta_{i,N}}\right)x_{i-1}+\frac{1}{\theta_{i,N}}x_0\\
			d_i&=\left(1-\frac{1}{\theta_{i,N}}\right)f'(x_{i-1})+\frac{2}{\theta_{i,N}}\sum_{j=0}^{i-1}\theta_{j, N} f'(x_j)\\
			\alpha&\in\argmin_{\alpha\in\real} f(y_i-\alpha d_i)\\
			x_{i}&=y_i-\alpha d_i
			\end{align*}
			\item[] Output: $x_N$.
		\end{itemize}
	\end{oframed}
	
	We omit the rate of convergence proofs, as they are identical to the proofs presented in the previous section.
	As in the non-smooth case, tightness of the worst-case bounds can be established by observing that they coincide with the lower complexity bound for the problem~\cite[Corollary 4]{drori2016exact} and therefore they cannot be improved in the large-scale setting ($d\geq N+2$).

	\begin{remark}
		Before proceeding, let us note that the results above are reminiscent to the historical developments premising accelerated methods. Indeed, acceleration was first discovered in the work of Nemirovski and Yudin~\cite{nemirovski1982orth,nemirovski1983information}, who needed two or three-dimensional subspace-searches for obtaining the optimal convergence rate for smooth convex minimization. This rather strong requirement  was removed later on in the work of Nesterov, resulting in the first version of the celebrated fast gradient method~\cite{Nesterov:1983wy} not requiring any line (or space) search.
	\end{remark}

	\subsection{Example: a universal method for non-smooth and smooth convex minimization}
	In this short section, we build upon the results of Section~\ref{ex:ns_gfom} and Section~\ref{ex:gfom_ogm} to develop a \emph{universal} method for both smooth and non-smooth minimization, i.e., a method that does not require any knowledge on which of the two classes the function belongs to, nor does it require knowledge on the specific parameters of the classes. The knowledge is replaced by the capability of performing exact three-dimensional subspace minimizations.
\newpage
	\begin{oframed}
		\textbf{A universal method for non-smooth and smooth minimization (UM)}
		\begin{itemize}
			\item[] Input: $f\in\mathcal{F}_{0,L}(\mathbb{R}^d)$ or $f\in\mathcal{C}_{M}(\mathbb{R}^d)$, initial guess $x_0\in\mathbb{R}^d$, number of iterations~$N$. \medskip
			\item[] For $i=1,\hdots,N$:
			\begin{align*}
			y^{(1)}_i &=\frac{i}{i+1}x_{i-1}+\frac{1}{i+1}x_0\\
			d^{(1)}_i  &= \frac{1}{i+1} \sum_{j=0}^{i-1} f'(x_j) \\
			y^{(2)}_i&=\left(1-\frac{1}{\theta_{i,N}}\right)x_{i-1}+\frac{1}{\theta_{i,N}}x_0\\
			d^{(2)}_i&=\left(1-\frac{1}{\theta_{i,N}}\right)f'(x_{i-1})+\frac{2}{\theta_{i,N}}\sum_{j=0}^{i-1}\theta_{j, N} f'(x_j)\\
			\alpha,\beta,\gamma&\in\argmin_{\alpha,\beta,\gamma \in\real} f(\alpha y^{(1)}_i + (1-\alpha) y^{(2)}_i - \beta d^{(1)}_i -\gamma d^{(2)}_i)\\
			x_{i}&=\alpha y^{(1)}_i + (1-\alpha) y^{(2)}_i - \beta d^{(1)}_i -\gamma d^{(2)}_i\\
			&\text{Choose }f'(x_i)\in\partial f(x_i)\text{ such that } \inner{f'(x_i)}{d_{i}}=0
			\end{align*}
			\item[] Output: $x_N$.
		\end{itemize} 
	\end{oframed}
	
\begin{corollary}\label{cor:universal_gfom}
Let $x_N$ be an output generated by UM given $f$, $x_0$ and $N\in \mathbb{N}$.
\begin{enumerate}
\item If $(f, x_0)$ is an $(\AT{\mathcal{C}_M}, R_x)$-input for some $M\geq 0$, then
	\[ 
	    f(x_N)-\fopt\leq \frac{MR_x}{\sqrt{N+1}},
	 \]
\item If $(f, x_0)$ is an $(\FL, R_x)$-input for some $L\geq 0$, then
	\[ 
	    f(x_N)-\fopt\leq \frac{LR_x^2}{2\theta_{N,N}^2}.
    \]
\end{enumerate}
\end{corollary}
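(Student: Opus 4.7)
The plan is to reduce the two parts of the claim to verification of the recurrences~\eqref{eq:rec_fs_gfom_nonsmooth} and~\eqref{eq:rec_fs_gfom_smooth} so that Corollary~\ref{cor:gfom_subgrad}(2) and Corollary~\ref{cor:gfom_grad}(2) deliver the corresponding bounds directly. Hence the whole proof reduces to exhibiting, at each iteration, a choice of $f'(x_i)\in\partial f(x_i)$ making both recurrences valid in their respective settings; the same subgradient will serve both cases.

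The key observation is that the three-dimensional minimization defining $x_i$ takes place over an affine subspace with three independent variations, parameterized by $(\alpha,\beta,\gamma)$. By first-order optimality of that minimum there exists $f'(x_i)\in\partial f(x_i)$ simultaneously orthogonal to each variation:
\begin{equation*}
    \inner{f'(x_i)}{y_i^{(1)}-y_i^{(2)}}=0,\quad \inner{f'(x_i)}{d_i^{(1)}}=0,\quad \inner{f'(x_i)}{d_i^{(2)}}=0.
\end{equation*}
This is the subgradient referenced in the last line of the UM pseudo-code (with the stated orthogonality condition understood as covering all three search directions). These three orthogonalities are the sole algebraic ingredient used in what follows.

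Next, I would identify the bracketed targets appearing in~\eqref{eq:rec_fs_gfom_nonsmooth} and~\eqref{eq:rec_fs_gfom_smooth} with the one-step images produced by the two base methods: a direct computation shows that the bracket in~\eqref{eq:rec_fs_gfom_nonsmooth} equals $y_i^{(1)}-\tfrac{R_x}{M\sqrt{N+1}}d_i^{(1)}$, and the bracket in~\eqref{eq:rec_fs_gfom_smooth} equals $y_i^{(2)}-\tfrac{1}{L}d_i^{(2)}$. Substituting the UM update $x_i=\alpha y_i^{(1)}+(1-\alpha)y_i^{(2)}-\beta d_i^{(1)}-\gamma d_i^{(2)}$ then expresses both $x_i-y_i^{(1)}+\tfrac{R_x}{M\sqrt{N+1}}d_i^{(1)}$ and $x_i-y_i^{(2)}+\tfrac{1}{L}d_i^{(2)}$ as explicit linear combinations of the three vectors $y_i^{(1)}-y_i^{(2)}$, $d_i^{(1)}$, $d_i^{(2)}$. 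Taking the inner product with $f'(x_i)$ and invoking the three orthogonalities above makes every term vanish, which establishes~\eqref{eq:rec_fs_gfom_nonsmooth} in the $\mathcal{C}_M$ setting and~\eqref{eq:rec_fs_gfom_smooth} in the $\FL$ setting. Corollaries~\ref{cor:gfom_subgrad} and~\ref{cor:gfom_grad} then yield the two bounds.

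The main difficulty is conceptual rather than computational: recognizing that enriching the search subspace of each base scheme to contain \emph{both} sets of base points and directions is exactly what enables a single exact three-dimensional minimization to reproduce simultaneously, through its first-order optimality conditions, the two one-dimensional recurrences that certified the bounds of each base method. Once this is identified, the verification collapses to a one-line substitution in each case, analogous to the line-search argument already used in the proof of Corollary~\ref{cor:nsmooth_ELS}.
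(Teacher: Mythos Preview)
Your proposal is correct and follows essentially the same approach as the paper: both arguments identify the bracketed targets in~\eqref{eq:rec_fs_gfom_nonsmooth} and~\eqref{eq:rec_fs_gfom_smooth} with $y_i^{(1)}-\tfrac{R_x}{M\sqrt{N+1}}d_i^{(1)}$ and $y_i^{(2)}-\tfrac{1}{L}d_i^{(2)}$, substitute the UM update for $x_i$, and conclude using the three orthogonality relations $\inner{f'(x_i)}{y_i^{(1)}-y_i^{(2)}}=\inner{f'(x_i)}{d_i^{(1)}}=\inner{f'(x_i)}{d_i^{(2)}}=0$ coming from first-order optimality of the three-dimensional search.
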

	\begin{proof}
		For the first part of the claim, it is enough to establish that the sequence generated by UM satisfies~\eqref{eq:rec_fs_gfom_nonsmooth}. 
		Indeed, for $i=1,\dots,N$
		\begin{align*}
		& \inner{f'(x_i)}{x_i-\left[\frac{i}{i+1}x_{i-1}+\frac{1}{i+1}x_0-\frac{1}{i+1}\frac{R_x}{M\sqrt{N+1}}\sum_{j=0}^{i-1}f'(x_j)\right]}\\
		& = \inner{f'(x_i)}{x_i-y^{(1)}_i+\frac{R_x}{M\sqrt{N+1}}d^{(1)}_i} \\
		& = \inner{f'(x_i)}{(\alpha-1) y^{(1)}_i + (1-\alpha) y^{(2)}_i - \beta d^{(1)}_i -\gamma d^{(2)}_i+\frac{R_x}{M\sqrt{N+1}}d^{(1)}_i} \\
		& =0,
		\end{align*}
		where the last equality follows from the optimality conditions
		of the exact subspace minimization step:
		\begin{align*}
		&\inner{f'(x_i)}{y_i^{(1)}-y_i^{(2)}}=0,\\
		&\inner{f'(x_i)}{d_i^{(1)}}=0,\\
		&\inner{f'(x_i)}{d_i^{(2)}}=0.
		\end{align*}
		
		The second part of the claim follows in an analogous way using \eqref{eq:rec_fs_gfom_smooth}.\qed
	\end{proof}
	
	\section{Numerical construction of efficient methods}\label{sec:numerical_gfom_to_fsfom}
	
	In this section we focus our attention to situations where either \eqref{gfom_dualPEP} does not have a known analytical solution, or the analytical solution is too complex for practical purposes. In particular, we examine the performance of the methods generated by SSEP in the case where $f\in\mathcal{F}_{\mu,L}$ (an $L$-smooth, $\mu$-strongly convex function). Note that since $f\in \FmuL$ implies $\frac{1}{L}f\in \mathcal{F}_{\frac{\mu}{L},1}$, the worst-case analyses can be limited to the case $L=1$, where simple homogeneity arguments allow extending the results to the case of arbitrary $L>0$. For a short discussion on this topic, we refer to~\cite[Section 4.2.5]{taylor2017convex}.

	In this setting, finding analytical solutions to~\eqref{gfom_dualPEP} appears to be significantly more involved than in the previous examples, nevertheless the problem can be efficiently approximated numerically using standard SDP solvers and SSEP can proceed as described above. A problem with this approach, however, is that the computation complexity of a na\"ive implementation of Corollary~\ref{cor:ssepfixedstep} would require storing all subgradients encountered throughout the computations and performing $O(i)$ vector operations at the $i^{\text{th}}$ iteration, making such an implementation undesirable in practice. In other words, SSEP generally allows recovering a set of coefficients $\{\tilde\beta_{i,j}, \tilde\gamma_{i,j}\}$ of a first-order method
	\begin{align}
		x_i=x_0 -\sum_{j=1}^{i-1} \frac{\tilde\gamma_{i,j}}{\tilde\gamma_{i,i}}(x_j-x_0)-\sum_{j=0}^{i-1}\frac{\tilde\beta_{i,j}}{\tilde\gamma_{i,i}} f'(x_j), \quad i=1,\hdots,N,\label{eq:reconstructed_algorithm2}
    \end{align}
	that enjoys the same worst-case guarantees as GFOM; however, using such a method requires, in general, keeping track of all coefficients $\{\tilde\beta_{i,j}, \tilde\gamma_{i,j}\}$ and all previous iterates and gradients $\{x_j, f'(x_j)\}$.
	
	\sloppy An approach for overcoming those drawbacks originates from the observation that in practically all situations we encountered, the coefficients $\{\tilde\beta_{i,j}, \tilde\gamma_{i,j}\}$ resulting from an optimal solution of~\eqref{gfom_dualPEP} enjoyed advantageous structural properties allowing to \emph{factor} the parameters $\{\tilde\beta_{i,j}, \tilde\gamma_{i,j}\}$ in a way that (a) does not require storing all coefficients (those coefficients are essentially \emph{separable} in $i$ and $j$), and (b) does not require storing all previous iterates and gradients in memory.
	Below we present one such algorithm structure, and demonstrate how its parameters can be extracted from the numerical solution of~\eqref{gfom_dualPEP}.
	
	\paragraph{Factorization of the SSEP-based method for strongly convex minimization}
	As discussed above, one can observe that optimal solutions to~\eqref{gfom_dualPEP} often enjoys certain advantageous structural properties. 	In particular, in the case of smooth strongly convex optimization, instances of the SSEP method fit the form~\eqref{eq:algo_eff_form2} below within high accuracy levels.
	Note that the form~\eqref{eq:algo_eff_form2} is a straightforward generalization of OGM, which is the SSEP method derived for the non-strongly convex case, $\mu=0$, established in Section~\ref{ex:gfom_ogm} (see e.g.~\cite[Section 7.1]{kim2014optimized}).
	
\begin{oframed}
		\textbf{SSEP-based gradient method for smooth strongly convex minimization}
		\begin{itemize}
			\item[] Input: $f\in\mathcal{F}_{\mu,L}(\mathbb{R}^d)$, initial guess $y_0=x_0\in\mathbb{R}^d$, maximum number of iterations $N$. \medskip
			\item[] For $i=1,\hdots,N$:
			\begin{equation}
			\begin{aligned}\label{eq:algo_eff_form2}
			& y_{i}=x_{i-1}-\frac1Lf'(x_{i-1})\\
			& x_{i}=y_{i}+\zeta_i (y_i-y_{i-1})+\eta_i(y_i-x_{i-1})
			\end{aligned}
			\end{equation}
			\item[] Output: $x_N$.
		\end{itemize}  
\end{oframed}

	Let us quickly describe a procedure for deriving candidate values for $\{(\eta_i,\zeta_i)\}$ to be used in~\eqref{eq:algo_eff_form2} given a set $\{\tilde\beta_{i,j}, \tilde\gamma_{i,j}\}$ of numerically-derived coefficients for~\eqref{eq:reconstructed_algorithm2}. We start the procedure by recursively eliminating all instances of $x_j$ with $j\geq 1$, from the right-hand-side of algorithm~\eqref{eq:reconstructed_algorithm2} using their definition, reaching a canonical form
	\[
	x_i = x_0 -\sum_{j=0}^{i-1}h_{i,j} f'(x_j).
	\]
	This representation is important as all fixed-step first-order methods have a unique representation using $\{h_{i,j}\}$, but generally no unique representation in terms of $\{\tilde\beta_{i,j}, \tilde\gamma_{i,j}\}$. Then, one can write algorithm~\eqref{eq:algo_eff_form2} in the same format with step-sizes $ \{h_{i,j}'\}$, which in this case should satisfy
    \begin{equation}\label{eq:factor}
	\begin{aligned}
	    & h_{-1,j}'=h_{0,j}'=0 \text{ for all $j$}\\
	    & h_{i,j}'=\left\{\begin{array}{cc} 
	        \tfrac1L (1+\zeta_i+\eta_i) & \text{ if } j=i-1 \\
	        h_{i-1,j}'(1+\zeta_i)-\tfrac1L \zeta_i& \text{ if } j=i-2  \\
	        h_{i-1,j}'+\zeta_i(h_{i-1,j}'-h_{i-2,j}') & \text{ otherwise.}
	    \end{array}\right\} \ \text{ for }i\geq 1.
	    \end{aligned}
	\end{equation}
	Now, assuming that both implementations~\eqref{eq:reconstructed_algorithm2} and~\eqref{eq:algo_eff_form2} describe the same algorithm, we have, in particular, $h_{i,i-1}=h_{i,i-1}'$ and $h_{i,i-2}=h_{i,i-2}'$. As a result, one can identify candidate values for $\eta_i$ and $\zeta_i$ as follows:
	\begin{equation*}
	    \begin{aligned}
	    \zeta_i&=\tfrac{h_{i,i-2}-h_{i-1,i-2}}{h_{i-1,i-2}-1/L}\\
	    \eta_i&=L\, h_{i,i-1}-1-\zeta_i,
	    \end{aligned}
	\end{equation*}
	where we also arbitrarily set $\zeta_0=0$. Finally, one can numerically verify that $h_{i,j}\approx h_{i,j}'$ indeed holds for all $i,j$ by generating the full set $\{h_{i,j}'\}$ using~\eqref{eq:factor} and the candidate values $\{(\eta_i,\zeta_i)\}$. 
	
	The numerical values of $\{(\eta_i,\zeta_i)\}$ derived in the case of $N=10$ for a few values of $\kappa$'s are reported in Table~\ref{tab:smoothstrcvx_gfom}. Since the approach is limited by our capability to accurately solve SDPs, it is important to note that PEPs can be used again for validating the performances of the final method (shown in the one before last row in Table~\ref{tab:smoothstrcvx_gfom}).
	
	\begin{table}[t]
\begin{center}
{\renewcommand{\arraystretch}{1.4}
\begin{tabular}{@{}c|cc|cc|cc|cc@{}}
\specialrule{2pt}{1pt}{1pt}
 &\multicolumn{2}{c|}{ $\kappa=\infty$} & \multicolumn{2}{c|}{$\kappa=1000$} & \multicolumn{2}{c|}{$\kappa=100$} & \multicolumn{2}{c}{$\kappa=50$} \\
 $i$  & $\zeta_i$ & $\eta_i$ & $\zeta_i$ & $\eta_i$ & $\zeta_i$ & $\eta_i$ & $\zeta_i$ & $\eta_i$ \\
\hline
1 & $0$             &  $0.6180$  & $0$           &  $0.6173$        &  $0$      & $0.6110$   & $0$         & $0.6039$\\
2 & $0.2818$        &  $0.7376$  & $0.2810$      &  $0.7365$        &  $0.2744$ & $0.7259$   & $0.2671$    & $0.7142$\\
3 & $0.4340$        &  $0.7977$  & $0.4325$      &  $0.7960$        &  $0.4184$ & $0.7812$   & $0.4030$    & $0.7650$\\
4 & $0.5311$        &  $0.8346$  & $0.5286$      &  $0.8324$        &  $0.5068$ & $0.8132$   & $0.4835$    & $0.7929$\\
5 & $0.5988$        &  $0.8597$  & $0.5954$      &  $0.8570$        &  $0.5661$ & $0.8339$   & $0.5352$    & $0.8099$\\
6 & $0.6489$        &  $0.8780$  & $0.6448$      &  $0.8750$        &  $0.6085$ & $0.8485$   & $0.5708$    & $0.8216$\\
7 & $0.6876$        &  $0.8920$  & $0.6829$      &  $0.8888$        &  $0.6413$ & $0.8607$   & $0.5981$    & $0.8321$\\
8 & $0.7185$        &  $0.9030$  & $0.7136$      &  $0.9001$        &  $0.6701$ & $0.8738$   & $0.6240$    & $0.8462$\\
9 & $0.7437$        &  $0.9120$  & $0.7392$      &  $0.9097$        &  $0.6985$ & $0.8892$   & $0.6539$    & $0.8663$\\
10& $0.5542$        &  $0.6663$  & $0.5514$      &  $0.6652$        &  $0.5265$ & $0.6553$   & $0.4988$    & $0.6441$\\
\hline
$f(\GFOM_{10}(f,x_0))-f_*\leq$ &\multicolumn{2}{c|}{$\frac{L\normsq{x_0-x_*}}{159.07}$} & \multicolumn{2}{c|}{$\frac{L\normsq{x_0-x_*}}{164.95}$}& \multicolumn{2}{c|}{$\frac{L\normsq{x_0-x_*}}{230.87}$}& \multicolumn{2}{c}{$\frac{L\normsq{x_0-x_*}}{340.41}$}  \\
Using~\eqref{eq:reconstructed_algorithm2}, $f(x_{10})-f_*\leq$ &\multicolumn{2}{c|}{$\frac{L\normsq{x_0-x_*}}{159.07}$} & \multicolumn{2}{c|}{$\frac{L\normsq{x_0-x_*}}{165.04}$}& \multicolumn{2}{c|}{$\frac{L\normsq{x_0-x_*}}{232.86}$}& \multicolumn{2}{c}{$\frac{L\normsq{x_0-x_*}}{347.88}$}  \\
Using~\eqref{eq:algo_eff_form2}, $f(x_{10})-f_*\leq$ &\multicolumn{2}{c|}{$\frac{L\normsq{x_0-x_*}}{159.07}$} & \multicolumn{2}{c|}{$\frac{L\normsq{x_0-x_*}}{165.04}$}& \multicolumn{2}{c|}{$\frac{L\normsq{x_0-x_*}}{232.86}$}& \multicolumn{2}{c}{$\frac{L\normsq{x_0-x_*}}{347.88}$}  \\
$\max_{i,j} \lvert h_{i,j}-{h}_{i,j}'\rvert \approx$ & \multicolumn{2}{c|}{ $8.2\times 10^{-9}$} & \multicolumn{2}{c|}{ $2.1\times 10^{-6}$} & \multicolumn{2}{c|}{ $1.8\times 10^{-6}$} & \multicolumn{2}{c}{ $9.1 \times 10^{-6}$} \\ 
\specialrule{2pt}{1pt}{1pt}
      
\end{tabular}       
}
\end{center}
\caption{Parameters for the implementation~\eqref{eq:algo_eff_form2} of the SSEP-based gradient methods for smooth strongly convex minimization with $N=10$ and $\kappa\in\{\infty, 1000, 100, 50\}$ recovered using MOSEK~\cite{Article:Mosek}, and the associated worst-case guarantees on $f(x_{10})-f(x_*)$. The worst-case objective function accuracy for GFOM and for the vanilla (unfactored) SSEP method~\eqref{eq:reconstructed_algorithm2} are also given for validation purposes. Finally, we provide the absolute inaccuracy observed between the coefficients $h_{i,j}$ of the vanilla SSEP method~\eqref{eq:reconstructed_algorithm2} versus the corresponding coefficients $h_{i,j}'$ obtained for the factored method~\eqref{eq:algo_eff_form2}.
}
\label{tab:smoothstrcvx_gfom}
\end{table}

\paragraph{Comparison to other methods}
	Figure~\ref{Fig:OGMSC_comp} presents a comparison of the worst-case bounds in the case $\kappa=100$ for GFOM, for an SSEP-based method performing no line searches, the celebrated fast (or accelerated) gradient method (FGM) for smooth strongly convex minimization~\cite[Theorem 2.1.12]{Book:Nesterov}, and the very recent triple momentum method (TMM)~\cite{VanScoy2017}.
	These worst-case bounds were derived numerically by solving the corresponding PEPs using the interpolation conditions presented in Example~\ref{ex:strongly_convex_ic} (see~\cite{Article:Drori,taylor2015smooth} for details on the derivation of PEPs for fixed-step methods). Note that the bound for the SSEP method was generated for the form~\eqref{eq:reconstructed_algorithm2}; bounds for the efficient form~\eqref{eq:algo_eff_form2} behave almost exactly like the bounds for the form~\eqref{eq:reconstructed_algorithm2}---the difference could not be observed from this plot---and were therefore omitted from the comparison.
	
	Finally, the numerical results presented above support the conjectures that the subspace-searching GFOM enjoys an $O( (1-\sqrt{2}\sqrt{\kappa^{-1}})^N R_x^2)$ rate of convergence while the SSEP-based method that does not perform any line searches, converges at the faster rate $O( (1-2\sqrt{\kappa^{-1}})^N R_x^2)$ (matching the rate of convergence bounds on TMM~\cite[Theorem~1]{VanScoy2017}).
	This is not in contraction with the theory, as noted in Remark~\ref{rem:ssep_bound_discuss}, however, we are currently unable to find an intuitive explanation to this phenomenon besides the algebraic observation that the PEPs corresponding to the methods~\eqref{eq:reconstructed_algorithm2} and~\eqref{eq:algo_eff_form2} have less degrees of freedom than the PEP for GFOM.

\begin{figure}
	\begin{center}
		\subfigure{
			\begin{tikzpicture}
			\begin{axis}[
			ymode=log,  
			xlabel=Iteration counter $i$,
			ylabel=Worst-case of $f(x_i) - \fopt$]
			\addplot[mark=square,red,thick] table [y=TMM, x=N,]{strongly_convex_comparison_bounded_distance.txt};
			\addlegendentry{TMM}
			\addplot[mark=o,teal,thick] table [y=FGM, x=N]{strongly_convex_comparison_bounded_distance.txt};
			\addlegendentry{FGM}
			\addplot[mark=triangle,blue,thick] table [y=GFOMAdrien, x=N]{strongly_convex_comparison_bounded_distance.txt};
			\addlegendentry{GFOM}
			\addplot[mark=diamond,cyan,thick] table [y=SSEP, x=N, skip coords between index={24}{25}]{strongly_convex_comparison_bounded_distance.txt};
			\addlegendentry{SSEP}
			\end{axis}
			\end{tikzpicture}
		}
		\caption{Numerical approximation of the worst-case performance of the FGM, TMM \cite{VanScoy2017}, GFOM and the SSEP method with no line-search for minimizing a $1$-smooth $0.01$-strongly convex function for instances with $\|x_0-\xopt\|\leq 1$.
		For the fast gradient method, we used the inertial parameter ${(1-\sqrt{\mu/L})}/{(1+\sqrt{\mu/L})}$.
			}
		\label{Fig:OGMSC_comp}
	\end{center}
\end{figure}
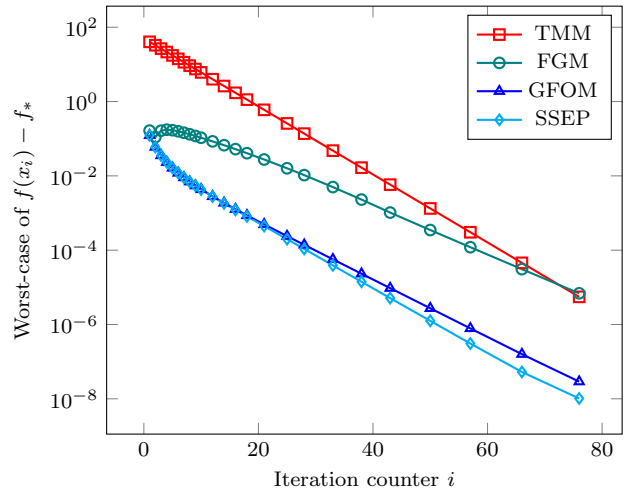

	\section{Conclusion}\label{sec:ccl}
	The main goal of this work is to provide a systematic and efficient approach for designing first-order optimization algorithms. The contribution is essentially threefold: first, we extend the performance estimation framework for obtaining worst-case guarantees for a greedy method that performs arbitrary subspace searches, and show that the generated guarantees are tight in the large-scale setting under some weak assumptions. Then, we describe a methodology for systematically designing fixed-step methods that share the same worst-case guarantees as the subspace searching greedy method. Finally, based on the new methodology, we derive optimal methods for non-smooth and smooth convex minimization and versions of these methods that do not require prior knowledge of the problem parameters.

	As illustrated in \AT{Section~\ref{sec:numerical_gfom_to_fsfom}}, the methodology can also serve in cases where numerical results cannot easily be converted to practical analytical optimization schemes. For example, for real-time embedded optimization~\cite{diehl2009efficient}, where it is acceptable to spend some time performing pre-computations in order to more efficiently perform simple repetitive routines.

	Direct extensions of the approach include considering additional families of functions and oracles, such as composite functions involving proximal terms and projections~\cite{beck2009fast,nesterov2007gradient}, inexactness~\cite{de2016worst,devolder2013intermediate,devolder2014first,schmidt2011convergence}, stochastic oracles arising in finite sums~\cite{defazio2014saga,johnson2013accelerating,roux2012stochastic}, or block-coordinate descent~\cite{nesterov2012efficiency,wright2015coordinate}.
	Even further extensions include considering additional variants of the greedy method, e.g., using alternative optimality criteria, as the distance to an optimal point. 
	
	\paragraph{Implementation} All the worst-case performance analyses presented above were numerically validated using the \textsc{pesto} toolbox~\cite{taylorperformance}. The code for reproducing the worst-case guarantees is available at \begin{center}
		\url{https://github.com/AdrienTaylor/GreedyMethods}
	\end{center}
	Numerical experiments were produced using \textsc{cvx} and \textsc{yalmip}~\cite{grant2008cvx,Article:Yalmip} along with MOSEK~\cite{Article:Mosek}.
	
\newpage
\appendix

\section{Proof of Lemma~\ref{lem:pep_for_gfom}}\label{sec:GFOMproof}
	We start the proof of Lemma~\ref{lem:pep_for_gfom} with the following a technical lemma.
	
	\begin{lemma} \label{lem:lindep} Let $\mathcal{F}$ be a class of contraction-preserving \ccp functions (see Definition~\ref{def:contraction}), and let $S=\{(x_i,g_i,f_i)\}_{i\in \I}$ be an $\mathcal{F}$-interpolable set satisfying
		\begin{align}
		& \inner{g_i}{g_j}=0, \quad\text{for all } 0\leq j<i=1,\hdots,N,\label{eq:opt_cond1}\\
		& \inner{g_i}{x_j-x_0}=0,\quad \text{for all } 1\leq j\leq i=1,\hdots,N,\label{eq:opt_cond2}
		\end{align}
		then there exists $\{\hat x_i\}_{i\in \I}\subset \Rd$ such that the set $\hat S=\{(\hat x_i,g_i,f_i)\}_{i\in \I}$ is $\mathcal{F}$-interpolable, and
		\begin{align}
		& \norm{\hat x_0 - \hat x_*}\leq \norm{x_0-x_*},  \label{eq:lindep_contract}\\
		&\hat x_i \in \hat x_0 + \mathrm{span}\{g_0,\hdots,g_{i-1}\},\quad {i=0,\hdots, N}. \label{eq:lindep_span}
		\end{align}
	\end{lemma}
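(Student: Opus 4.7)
The plan is to construct the $\hat x_i$'s as orthogonal projections of $x_i$ (after translating $x_0$ to the origin) onto the span of the gradients, and then invoke the contraction-preserving hypothesis on $\mathcal{F}$ (Definition~\ref{def:contraction}) to obtain interpolability of $\hat S$.

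Concretely, after translating so that $x_0=0$, I would let $V:=\linspan\{g_0,\dots,g_N\}$, let $P:\Rd\to V$ denote the orthogonal projection, and define $\hat x_0:=0$, $\hat x_i:=P(x_i)$ for $i=1,\dots,N$, and $\hat x_*:=P(x_*)$. Since $P$ is $1$-Lipschitz, $\norm{\hat x_i-\hat x_j}=\norm{P(x_i-x_j)}\leq \norm{x_i-x_j}$ for every $i,j\in \I$, which yields both the contraction inequality in \eqref{eq:contracted_xi} and the bound \eqref{eq:lindep_contract} as the special case $(i,j)=(0,*)$. For the inner-product equality in \eqref{eq:contracted_xi}: when $j\in\{0,\dots,N\}$ we have $g_j\in V$, so $\inner{g_j}{\hat x_i-\hat x_k}=\inner{g_j}{P(x_i-x_k)}=\inner{g_j}{x_i-x_k}$; and for $j=*$ the equality is trivial since $g_*=0$ (as $x_*$ minimizes $f$ and $g_*$ is a subgradient at the minimum). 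The contraction-preserving hypothesis then delivers the $\mathcal{F}$-interpolability of $\hat S$.

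To verify the span condition \eqref{eq:lindep_span}, I would exploit \eqref{eq:opt_cond1}, which says $\{g_0,\dots,g_N\}$ is an orthogonal family, so its nonzero members form an orthogonal basis of $V$ and $P(x_i)$ expands as $\sum_{k:\, g_k\neq 0}\frac{\inner{g_k}{x_i}}{\normsq{g_k}}g_k$. Assumption~\eqref{eq:opt_cond2} gives $\inner{g_k}{x_i}=0$ whenever $k\geq i$, so only the terms with $k<i$ survive, placing $\hat x_i-\hat x_0=P(x_i)$ in $\linspan\{g_0,\dots,g_{i-1}\}$ as required.

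The main subtlety I expect is handling the index $j=*$ in the contraction conditions: the Definition~\ref{def:contraction} requirement must hold against $g_*$ as well, and this is resolved cleanly under the standard convention $g_*=0$ at the optimum, which is precisely the setting in which the parent Lemma~\ref{lem:pep_for_gfom} invokes this technical result (via the constraint $g_*=0$ in \eqref{gfom_dPEP}). A second, minor point to check is that the span identity still lands $P(x_i)$ inside $V_i$ even when some gradient $g_k$ with $k<i$ vanishes, but this case is benign since zero terms contribute nothing to either the projection or to $V_i$.
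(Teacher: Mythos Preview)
Your argument is correct and follows essentially the same idea as the paper's proof: both remove from each $x_i-x_0$ the component orthogonal to the gradients and then invoke the contraction-preserving property. Your packaging via a single orthogonal projection onto $V=\linspan\{g_0,\dots,g_N\}$ is slightly cleaner than the paper's explicit decomposition $x_i=x_0-\sum_{j<i}h_{i,j}g_j+v_i$ (and a separate treatment of $x_*$), and your observation that the $j=*$ case of \eqref{eq:contracted_xi} relies on $g_*=0$ applies equally to the paper's argument.
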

	
	\begin{proof}
		By the orthogonal decomposition theorem there exists $\{h_{i,j}\}_{0\leq j<i\leq N} \subset \real$ and $\{v_i\}_{0\leq i\leq N} \subset \Rd$ 
		with $\inner{g_k}{v_i}=0$ for all $0\leq k<i \leq N$ such that
		\begin{align*}
		x_i&=x_0-\sum_{j=0}^{i-1} h_{i,j}g_j +v_i, \quad { i=0,\hdots, N},
		\end{align*}
		furthermore, there exist $r_*\in \Rd$ satisfying $\inner{r_*}{v_j}=0$ for all $0\leq j \leq N$ and some $\{\nu_{j}\}_{0\leq j\leq N}\subset \real$, such that
		\[
		x_*=x_0 + \sum_{j=0}^N \nu_{j}v_j + r_*.
		\]
		By \eqref{eq:opt_cond1} and~\eqref{eq:opt_cond2} it then follows that for all $k\geq i$
		\[
		\inner{g_k}{v_i} = \inner{g_k}{x_i-x_0+\sum_{j=0}^{i-1} h_{i,j} g_j} = 0,
		\]
		hence, together with the definition of $v_i$, we get
		\begin{equation}\label{eq:lindep_orthogonality}
		\inner{g_k}{v_i}=0, \quad {i,k=0,\hdots,N}.
		\end{equation}
		
		Let us now choose $\{\hat x_i\}_{i\in \I}$ as follows:
		\begin{align*}
		& \hat x_0:=x_0,\\
		& \hat x_i:=x_0-\sum_{j=0}^{i-1} h_{i,j} g_j, \quad { i =0,\hdots, N}, \\
		& \hat x_* := x_0+r_*.
		\end{align*}
		It follows immediately from this definition that~\eqref{eq:lindep_span} holds, it thus remains to show that $\hat S$ is $\mathcal{F}$-interpolable and that~\eqref{eq:lindep_contract} holds.
		
		In order to establish that $\hat S$ is $\mathcal{F}$-interpolable, from Definition~\ref{def:contraction} it is enough to show that the conditions in \eqref{eq:contracted_xi} are satisfied.
		This is indeed the case, as $\inner{g_j}{\hat x_i - \hat x_0}=\inner{g_j}{x_i-x_0}$
		follows directly from definition of $\{\hat x_i\}$ and~\eqref{eq:lindep_orthogonality},
		whereas $\norm{\hat x_i - \hat x_j}\leq \norm{x_i-x_j}$ 
		in the case $i,j\neq *$ follows from
		\begin{align*}
		\norm{x_i-x_j}^2&=\norm{x_0-\sum_{k=0}^{i-1} h_{i,k} g_k+v_i-x_0+\sum_{k=0}^{j-1} h_{j,k} g_k-v_j}^2\\
		&=\norm{\hat x_i - \hat x_j}^2+\norm{v_i-v_j}^2\\
		&\geq \norm{\hat x_i - \hat x_j}^2,  \quad {i,j=0,\hdots, N},
		\end{align*}
		and in the case $j=*$, follows from
		\begin{align*}
		\norm{x_i-x_*}^2&=\norm{x_0-\sum_{k=0}^{i-1} h_{i,k} g_k+v_i-x_0 -\sum_{j=0}^N \nu_{j}v_j - r_*}^2\\
		&=\norm{\hat x_i - \hat x_*}^2+\norm{v_i-\sum_{j=0}^N \nu_{j}v_j}^2\\
		&\geq \norm{\hat x_i - \hat x_*}^2, \quad {i=0,\hdots, N},
		\end{align*}
		where for the second equality we used $\inner{v_i}{r_*}=0$.
		The last inequality also establishes~\eqref{eq:lindep_contract}, which completes the proof.
		\qed
	\end{proof}

	\paragraph{Proof of Lemma~\ref{lem:pep_for_gfom}} 
	By the first-order necessary and sufficient optimality conditions (see e.g.,~\cite[Theorem 3.5]{ruszczynski2006nonlinear}), the definitions of $x_i$ and $f'(x_i)$ in~\eqref{E:gfomstep} and~\eqref{E:gfom_grad_selection}
	can be equivalently defined as a solution to the problem of finding $x_i\in \Rd$ and $f'(x_i)\in\partial f(x_i)$ {($0\leq i\leq N$)}, that satisfy:
	\begin{align*}
	& \inner{f'(x_i)}{f'(x_j)}=0, \quad\text{for all } 0\leq j<i=1,\hdots,N,  \notag \\
	& x_i\in x_0+\mathrm{span}\{f'(x_0),\hdots,f'(x_{i-1})\}, \quad\text{for all } i=1,\hdots,N,  \notag \\
	\end{align*}
	hence the problem~\eqref{Intro:PEP} can be equivalently expressed as follows:
	
	\begin{align}
	\sup_{ f, \left\{x_i\right\}_{i \in \I}, \{f'(x_i)\}_{i\in \I}}& f(x_N)-\fopt \label{gfompep:functional} \\
	\text{{subject to:} }
	& f\in\mathcal{F}(\Rd),\ x_* \text{ is a minimizer of } f, \notag\\
	& f'(x_i) \in \partial f(x_i), \quad\text{for all } i\in \I, \notag \\ 
	& \norm{x_0-x_*}\leq R_x, \notag\\
	& \inner{f'(x_i)}{f'(x_j)}=0, \quad\text{for all } 0\leq j<i=1,\hdots,N,  \notag \\
	& x_i\in x_0+\mathrm{span}\{f'(x_0),\hdots,f'(x_{i-1})\}, \quad\text{for all } i=1,\hdots,N.  \notag
	\end{align}
	
	Now, since all constraints in \eqref{gfompep:functional} depend only on the first-order information of $f$ at $\{x_i\}_{i\in \I}$, by taking advantage of Definition~\ref{def:Finterpolability} we can denote $f_i:=f(x_i)$ and $g_i:=f'(x_i)$ and treat these and as optimization variables, thereby reaching the following \emph{equivalent} formulation
	\begin{align}
	\sup_{\{(x_i,g_i,f_i)\}_{i\in \I}}&\ f_N-f_* \label{gfompep:discrete}\\
	\text{{subject to:} }&  \{(x_i,g_i,f_i)\}_{i\in \I} \text{ is }\mathcal{F}(\Rd)\text{-interpolable},   \notag\\
	& \norm{x_0-x_*}\leq R_x, \notag\\
	& g_*=0, \notag \\
	&\inner{g_i}{g_j}= 0, \ \text{for all } 0\leq j<i=1,\hdots N,\notag\\
	& x_i\in x_0+\mathrm{span}\{g_0,\hdots,g_{i-1}\},\quad\text{for all } i=1,\hdots,N.\notag
	\end{align}
	Since~\eqref{gfom_dPEP} is a relaxation of~\eqref{gfompep:discrete}, we get
	\[
	f(x_N) - \fopt \leq \val\eqref{Intro:PEP} \leq \val\eqref{gfom_dPEP},
	\]
	which establishes the bound~\eqref{eq:thm1}.
	
	\medskip
	In order to establish the second part of the claim, let $\varepsilon>0$. We will proceed to show that there exists some valid input for GFOM $(f, x_0)$, such that $f(\GFOM_N(f, x_0)) - \fopt \geq \val \eqref{gfom_dPEP}-\varepsilon$.
	
	Indeed, by the definition of \eqref{gfom_dPEP}, there exists a set $S=\{(x_i,g_i,f_i)\}_{i\in \I}$ that satisfies the constraints in~\eqref{gfom_dPEP} and reaches an objective value $f_N-f_* \geq \val \eqref{gfom_dPEP}-\varepsilon$.
	Since $S$ satisfies the requirements of Lemma~\ref{lem:lindep} (as these requirements are constraints in~\eqref{gfom_dPEP}), there exists a set of vectors $\{\hat x_i\}_{i\in \I}$ for which
	\begin{align*}
	& \norm{\hat x_0- \hat x_*}\leq R_x, \\
	& \hat x_i\in \hat x_0 + \mathrm{span}\{g_0,\hdots,g_{i-1}\},\quad i=0,\dots,N,
	\end{align*} 
	hold, and in addition, $\hat S:=\{(\hat x_i,g_i,f_i)\}_{i\in \I}$ is
	$\mathcal{F}(\Rd)$-interpolable.
	By definition of an $\mathcal{F}(\Rd)$-interpolable set, it follows that there exists a function $\hat f\in \mathcal{F}(\Rd)$ such that
	$\hat f(\hat x_i) = f_i$, $g_i \in \partial \hat f(\hat x_i)$, hence satisfying
	\begin{align*}
	& \inner{\hat f'(\hat x_i)}{\hat f'(\hat x_j)} = 0, \quad\text{for all } 0\leq j<i=1,\hdots,N, \\
	& \hat x_i\in \hat x_0+\mathrm{span}\{\hat f'(x_0),\hdots, \hat f'(\hat x_{i-1})\}, \quad\text{for all } i=1,\hdots,N.
	\end{align*}
	Furthermore, since $g_*=0$ we have that $\hat x_*$ is an optimal solution of $\hat f$.
	
	We conclude that the sequence $\hat x_0, \dots, \hat x_N$ forms a valid execution of \AT{GFOM} on the input $(\hat f, \hat x_0)$, that the requirement $\norm{\hat x_0 - \hat x_*}\leq R_x$ is satisfied, and that the output of the method, $\hat x_N$, attains the absolute inaccuracy value of $\hat f(\hat x_N) -\hat f(\hat x_*) = f_N - f_* \geq \val \eqref{gfom_dPEP}-\varepsilon$.
	
	\qed

\section{Proof of Theorem~\ref{thm:dual_pep_for_gfom}}\label{sec:zeroGap}
	\begin{lemma} Suppose there exists a pair $(f,x_0)$ such that $f\in\mathcal{F}$, $\norm{x_0-x_*}\leq R_x$ and 
	$\GFOM_{2N+1}(f, x_0)$ is not optimal for $f$,
		then~\eqref{gfom_sdpPEP} satisfies Slater's condition. In particular, no duality gap occurs between the primal-dual pair~\eqref{gfom_sdpPEP}, \eqref{gfom_dualPEP}, and the dual optimal value is attained.\label{lem:zerodualitygap}
	\end{lemma}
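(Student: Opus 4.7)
The plan is to establish Slater's condition for the SDP~\eqref{gfom_sdpPEP}: exhibit a pair $(F,G)$ with $G \succ 0$ that satisfies every equality constraint and every inequality constraint strictly. Once this is done, the zero-duality-gap and dual-attainment conclusions both follow from standard SDP Slater theory.

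The crucial resource provided by the hypothesis is that running GFOM on $(f, x_0)$ for $2N+1$ iterations yields iterates $z_0, \ldots, z_{2N+1}$ with gradients $h_0, \ldots, h_{2N+1}$ that are pairwise orthogonal by~\eqref{E:gfom_grad_selection} and all nonzero (a zero gradient would mean that the corresponding iterate had already reached the optimum, contradicting the assumption). The $N+1$ ``extra'' directions $h_{N+1}, \ldots, h_{2N+1}$ provide the room needed to thicken the Gram matrix to full rank while preserving the orthogonality equalities of~\eqref{gfom_sdpPEP}. Concretely, I would set $\tilde x_0 = z_0$, $\tilde x_i = z_i + \varepsilon h_{N+i}$ for $i = 1, \ldots, N$, $\tilde x_* = z_0 + (1-\delta)(z_* - z_0) + \varepsilon h_{2N+1}$, $\tilde g_i = h_i$ for $0 \le i \le N$, and $\tilde g_* = 0$, with small parameters $\varepsilon, \delta > 0$. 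The equalities $\inner{\tilde g_i}{\tilde g_j}=0$ and $\inner{\tilde g_i}{\tilde x_j - \tilde x_0}=0$ continue to hold because the perturbations along $h_{N+k}$ are orthogonal to every $\tilde g_i = h_i$ (since $i \le N < N+k$), while the unperturbed parts satisfy the orthogonality by~\eqref{E:gfomstep}. The $N+1$ perturbations also introduce fresh pairwise orthogonal components (in the $h_{N+1},\ldots,h_{2N+1}$ directions) to the columns of $P$ that are absent from the other columns, so all $2N+2$ columns are linearly independent and $G \succ 0$. For $\delta$ and $\varepsilon$ small enough (using $R_x > 0$, else the problem is trivial), the distance constraint $\|\tilde x_0 - \tilde x_*\|^2 < R_x^2$ is also strict.

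The last step---choosing function values $\tilde f_i$ so that every interpolation inequality becomes strict---is the main technical obstacle. Since $\{(z_i, h_i, \phi_i)\}_{i=0}^{N}\cup\{(z_*, 0, \phi_*)\}$ is $\mathcal{F}$-interpolable by $f$ itself, the non-strict inequalities hold with $\tilde f_i = \phi_i$; but when $f$ saturates some inequality (for instance a quadratic saturates every pair in the smooth class), the symmetric tight pairs $(i,j)$, $(j,i)$ cannot be broken by adjusting the $\tilde f_i$'s alone. Handling this either requires convexly combining the candidate with a separate strictly-interpolable reference configuration in $\mathcal{F}$, or a further small perturbation of the $(\tilde x_i, \tilde g_i)$ pairs that deforms the inner-product structure---using the same contraction-preserving flexibility of $\mathcal{F}$ already exploited in Theorem~\ref{thm:dual_pep_for_gfom}. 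Once a strictly feasible point is produced, the SDP Slater theorem yields both the zero duality gap and the attainment of the dual optimum.
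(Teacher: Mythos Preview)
Your plan misidentifies the obstacle and contains a genuine gap. First, making every interpolation inequality \emph{strict} is unnecessary: in~\eqref{gfom_sdpPEP} the only non-polyhedral constraint is $G\succeq 0$, so the relevant Slater condition requires only a feasible point with $G\succ 0$---the affine inequalities may be active. This is exactly what the paper verifies.

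The real problem is that your perturbation $\tilde x_i = z_i + \varepsilon h_{N+i}$ can make the interpolation constraints \emph{infeasible}, not merely tight. Since $h_{N+i}\perp h_0,\ldots,h_N$, all inner products $\inner{\tilde g_j}{\tilde x_i-\tilde x_k}$ and $\inner{\tilde g_i-\tilde g_j}{\tilde x_i-\tilde x_j}$ are preserved, but $\|\tilde x_i-\tilde x_j\|^2$ strictly increases by $\varepsilon^2(\|h_{N+i}\|^2+\|h_{N+j}\|^2)$. In~\eqref{eq:smooth_sc_ic} this term has coefficient $\tfrac{\mu}{2(1-\mu/L)}>0$ when $\mu>0$; adding the $(i,j)$ and $(j,i)$ inequalities eliminates the $f$-values and yields a necessary condition on the $x,g$ data alone that is violated for every $\varepsilon>0$ whenever it was tight before (e.g.\ $f$ quadratic with spectrum $\{\mu,L\}$). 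No choice of $\tilde f_i$ repairs this, and the contraction-preserving property goes the wrong way---it covers \emph{shrinking} distances, not enlarging them. The paper avoids all of this by not perturbing: it sub-samples, setting $\tilde x_i:=x_{2i}$, $\tilde g_i:=f'(x_{2i})$, $\tilde f_i:=f(x_{2i})$, $\tilde x_*:=x_*$. These are genuine first-order data of $f$, so interpolability is automatic; the skipped odd gradients $f'(x_{2j-1})$ furnish the extra directions that make the $2N+2$ columns of $P$ independent (together with $x_*-x_0\notin\linspan\{f'(x_0),\ldots,f'(x_{2N})\}$, since otherwise $x_{2N+1}$ would already be optimal), hence $\tilde G\succ 0$.
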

	\begin{proof}

		Let $(f,x_0)$ be a pair satisfying the premise of the lemma and denote by $\{x_i\}_{i\geq 0}$ the sequence generated according to GFOM and by $\{f'(x_i)\}_{i\geq 0}$ the subgradients chosen at each iteration of the method, respectively.
		By the assumption that the optimal value is not obtained after $2N+1$ iterations, we have $f(x_{2N+1})>\fopt$.

		We show that the set $\{(\tilde x_i,\tilde g_i, \tilde f_i)\}_{i\in \I}$ with 
		\begin{align*}
		& \tilde x_i:=x_{2i}, \quad i=0,\hdots,N, \\
		& \tilde x_*:=x_*, \\
		& \tilde g_i:=f'(x_{2i}), \quad i=0,\hdots,N, \\
		& \tilde g_*:=0, \\
		& \tilde f_i:=f(x_{2i}), \quad i=0,\hdots,N, \\
		& \tilde f_*:=f(\xopt),
		\end{align*} corresponds to a Slater point for~\eqref{gfom_sdpPEP}.
		
		In order to proceed, we consider the Gram matrix $\tilde G$ and the vector $\tilde F$ constructed from the set $\{(\tilde x_i, \tilde g_i, \tilde f_i)\}_{i\in \I}$ as in Section~\ref{sec:primalpep}. We then continue in two steps:
		\begin{itemize*}
			\item[(i)] we show that $(\tilde G, \tilde F)$ is feasible for~\eqref{gfom_sdpPEP},
			\item[(ii)] we show that $\tilde G\succ 0$.
		\end{itemize*}
		The proofs follow.
		\begin{itemize}
			\item[(i)] 
			First, we note that the set $\{(\tilde x_i, \tilde g_i, \tilde f_i)\}_{i\in \I}$ satisfies the interpolation conditions for $\mathcal{F}$, as it was obtained by taking the values and gradients of a function in $\mathcal{F}$.
			Furthermore, since $\tilde x_0 = x_0$ and $\tilde x_*=x_*$ we also get that the initial condition $\norm{\tilde x_0-\tilde x_*}\leq R_x$ is respected, and since $\{x_i\}$ correspond to the iterates of \AT{GFOM}, we also have by Lemma~\ref{lem:lindep} that
			\begin{align*}
			&\inner{\tilde g_i}{\tilde g_j}= 0, \quad \text{for all } 0\leq j<i=1,\hdots N, \\
			& \inner{\tilde g_i}{\tilde x_j-\tilde x_0}= 0, \quad \text{for all } 1\leq j \leq i=1,\hdots N.
			\end{align*}
			It then follows from the construction of
			$\tilde G$ and $\tilde F$ and by~\eqref{eq:sdp_vardef} that $\tilde G$ and $\tilde F$ satisfies the constrains of~\eqref{gfom_sdpPEP}.
			\item[(ii)] In order to establish that $\tilde G\succ 0$ it suffices to show that the vectors 
			\[
			\{\tilde g_0,\hdots, \tilde g_N ; \tilde x_1- \tilde x_0,\hdots,\tilde x_N- \tilde x_0 ; \tilde x_*- \tilde x_0 \}
			\]
			are linearly independent. Indeed, this follows from Lemma~\ref{lem:lindep}, since these vectors are all non-zero, and since $\tilde x_*$ does not fall in the linear space spanned by $\tilde g_0,\hdots, \tilde g_N ; \tilde x_1- \tilde x_0,\hdots, \tilde x_N- \tilde x_0$ (as otherwise $x_{2N+1}$ would be an optimal solution).
		\end{itemize}
		We conclude that $(\tilde G, \tilde F)$ forms a Slater point for~\eqref{gfom_sdpPEP}.\qed
	\end{proof}

\paragraph{Proof of Theorem~\ref{thm:dual_pep_for_gfom}}
The bound follows directly from
\[
    f(\GFOM_{N}(f, x_0)) - \fopt \leq \val \eqref{gfom_dPEP} \leq \val\eqref{gfom_sdpPEP},
\]
established by Lemmas~\ref{lem:pep_for_gfom} and~\ref{lem:pep_sdp_relax}.
The tightness claim follows from the tightness claims of Lemmas~\ref{lem:pep_for_gfom}, \ref{lem:pep_sdp_relax} and~\ref{lem:zerodualitygap}.
\qed
	
\section{Proof of Theorem~\ref{thm:gfom_reconstruction}}\label{sec:SSEP}
	We begin the proof of Theorem~\ref{thm:gfom_reconstruction} by recalling a well-known lemma on constraint aggregation, showing that it is possible to aggregate the constraints of a minimization problem while keeping the optimal value of the resulting program bounded from below.
	\begin{lemma}\label{lem:aggregation}
		Consider the problem 
		\begin{equation}\tag{P}\label{E:lemmaprimal}
		w:=\min\{f(x): h(x)=0, g(x)\leq 0\},
		\end{equation}
		where $f:\Rd\rightarrow \real$,  $h:\Rd\rightarrow \real^n$, $g:\Rd\rightarrow \real^m$
		are some (not necessarily convex) functions, and suppose $(\tilde \alpha, \tilde \beta)\in \real^{n}\times \real_+^{m}$ is a feasible point for the Lagrangian dual of \eqref{E:lemmaprimal} that attains the value $\tilde \omega$. Let $k\in \mathbb{N}$, and let $M\in \real^{n \times k}$ be a linear map such that $\tilde \alpha \in \mathrm{range}(M)$, then 
		\begin{equation}\tag{P$'$}\label{E:lemmaaggragated}
		w':=\min\{f(x): M^\top h(x)=0, g(x)\leq 0\}
		\end{equation}
		is bounded from below by $\tilde \omega$.
	\end{lemma}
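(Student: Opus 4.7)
The plan is to exhibit a feasible dual point for the aggregated problem~\eqref{E:lemmaaggragated} that attains the same Lagrangian value $\tilde\omega$, and then invoke weak duality.

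Concretely, since $\tilde\alpha\in\mathrm{range}(M)$, pick $\tilde\mu\in\real^k$ such that $M\tilde\mu=\tilde\alpha$. The Lagrangian of~\eqref{E:lemmaaggragated} with multipliers $(\mu,\beta)\in\real^k\times\real_+^m$ is
\[
L'(x,\mu,\beta)=f(x)+\mu^\top M^\top h(x)+\beta^\top g(x)=f(x)+(M\mu)^\top h(x)+\beta^\top g(x).
\]
Evaluating at $(\tilde\mu,\tilde\beta)$ gives $L'(x,\tilde\mu,\tilde\beta)=f(x)+\tilde\alpha^\top h(x)+\tilde\beta^\top g(x)$, which is precisely the Lagrangian of the original problem~\eqref{E:lemmaprimal} evaluated at the assumed dual-feasible point $(\tilde\alpha,\tilde\beta)$. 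By hypothesis the infimum of this quantity over $x\in\Rd$ equals $\tilde\omega$, so $(\tilde\mu,\tilde\beta)$ is feasible for the Lagrangian dual of~\eqref{E:lemmaaggragated} and attains value $\tilde\omega$.

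Weak duality (which holds regardless of convexity) applied to~\eqref{E:lemmaaggragated} then yields $w'\geq \tilde\omega$, as required.

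I do not foresee a real obstacle here: the argument is essentially a change of multipliers. The only subtlety is verifying that $\tilde\mu$ exists, which is exactly the hypothesis $\tilde\alpha\in\mathrm{range}(M)$, and that sign constraints are preserved, which is trivial since the multipliers for the inequality constraints $g(x)\leq 0$ are unchanged (we reuse $\tilde\beta\geq 0$) and the multipliers for the equality constraints $M^\top h(x)=0$ are free in sign.
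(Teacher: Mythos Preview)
Your proof is correct and follows essentially the same approach as the paper: both pick $u$ (your $\tilde\mu$) with $Mu=\tilde\alpha$ and observe that the Lagrangian of~\eqref{E:lemmaaggragated} at $(u,\tilde\beta)$ coincides with the original Lagrangian at $(\tilde\alpha,\tilde\beta)$, then apply weak duality. The only cosmetic difference is that the paper writes out the weak-duality inequality explicitly on feasible $x$, whereas you name it as such.
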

	\begin{proof}
		Let 
		\[
		L(x, \alpha, \beta) = f(x)+\alpha^\top h(x) + \beta^\top g(x)
		\]
		be the Lagrangian for the problem \eqref{E:lemmaprimal}, then by the assumption on $(\tilde \alpha, \tilde \beta)$ we have
		$
		\min_x L(x, \tilde \alpha, \tilde \beta) = \tilde \omega.
		$
		Now, 
		let $u\in \real^k$ be some vector such that $Mu = \tilde\alpha$, then
		for every $x$ in the domain of \eqref{E:lemmaaggragated} 
		\begin{align*}
		& \tilde\alpha^\top h(x)  = u^\top M^\top h(x) = 0, \\
		& \tilde \beta^\top g(x)\leq 0,
		\end{align*}
		where that last inequality follows from nonnegativity of $\tilde \beta$.
		We get
		\[
		f(x) \geq f(x) + \tilde\alpha^\top h(x) + \tilde \beta^\top g(x) = L(x, \tilde \alpha, \tilde \beta) \geq \tilde \omega, \quad \forall x: M^\top h(x)=0, g(x)\leq 0,
		\]
		and thus the desired result $w'\geq \tilde \omega$ holds.
		\qed
	\end{proof}

	Before proceeding with the proof of the main results, let us first formulate a performance estimation problem for the class of methods described by~\eqref{eq:newalgo_cstr}.
	\begin{lemma}\label{lem:fsmpep}
		Let $ R_x\geq 0$ and let $\{\beta_{i,j}\}_{1\leq i\leq N, 0\leq j\leq i-1}$, $\{\gamma_{i,j}\}_{1\leq i\leq N, 1\leq j\leq i}$ be some given sets of real numbers, 
		then for any pair $(f, x_0)$ such that $f\in\mathcal{F}(\Rd)$ and $\norm{x_0-x_*}\leq R_x$ (where $x_*\in \argmin_x f(x)$).
		Then for any sequence $\{x_i\}_{1\leq i\leq N}$ that satisfies
		\begin{equation}
		\inner{f'(x_i)}{\sum_{j=0}^{i-1}\beta_{i,j} f'(x_j) + \sum_{j=1}^{i} \gamma_{i,j}(x_j-x_0)}=0, \quad i=1,\hdots,N
		\end{equation}
		for some $f'(x_i)\in \partial f(x_i)$, the following bound holds:
		\begin{align*}
		f(x_N)-\fopt \leq &\sup_{ F\in\mathbb{R}^{N+1}, G\in\mathbb{R}^{2N+2\times 2N+2}} F^\top \bfu_N - F^\top \bfu_* \\
		& \begin{array}{lrl}
		\text{{subject to:} } 
		&\Tr {\icA_kG}+(\ica_k)^\top F+\icb_k\leq 0, & \quad\text{for all } k\in \K,\\
		&\inner{\bg_i}{\sum_{j=0}^{i-1} \beta_{i,j}\bg_j + \sum_{j=1}^{i} \gamma_{i,j}(\bx_j-\bx_0)}_G = 0, &\quad\text{for all } i=1,\hdots N,\\
		&\norm{\bx_0-\bx_*}_G^2-R_x^2\leq 0, &\\
		& G\succeq 0.
		\end{array}
		\end{align*}
	\end{lemma}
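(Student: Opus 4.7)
The plan is to mirror the two-step argument that took us from GFOM to~\eqref{gfom_sdpPEP} (i.e., Lemmas~\ref{lem:pep_for_gfom} and~\ref{lem:pep_sdp_relax}), but with the subspace-search constraints replaced by the single orthogonality constraint prescribed by~\eqref{eq:newalgo_cstr}. Since the claim is only an upper bound (no tightness), we do not need any contraction-preserving/projection argument, which makes the proof noticeably simpler than that of Lemma~\ref{lem:pep_for_gfom}.

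\textbf{Step 1: functional PEP.} First, I would observe that by the very definition of the worst-case performance, for any $(f,x_0)$ with $f\in\mathcal{F}(\Rd)$ and $\|x_0-x_*\|\leq R_x$, and any sequence $\{x_i\}$ satisfying~\eqref{eq:newalgo_cstr} for some choice of subgradients $f'(x_i)\in\partial f(x_i)$, the quantity $f(x_N)-\fopt$ is bounded above by the supremum of $f(x_N)-\fopt$ over \emph{all} such tuples $(f,x_0,\{x_i\},\{f'(x_i)\})$. This is a functional problem whose constraints involve $f$ only through first-order information at the points $\{x_i\}_{i\in \I}$.

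\textbf{Step 2: discretization via interpolation.} Exactly as in the proof of Lemma~\ref{lem:pep_for_gfom}, I would introduce the variables $f_i := f(x_i)$ and $g_i := f'(x_i)$ (together with $x_*\in\argmin f$, $g_*=0$, $f_*=f(x_*)$), and replace ``$f\in\mathcal{F}(\Rd)$ with prescribed subgradients and values'' by the equivalent requirement that $\{(x_i,g_i,f_i)\}_{i\in \I}$ is $\mathcal{F}$-interpolable (Definition~\ref{def:Finterpolability}). This yields the \emph{equivalent} discrete problem
\begin{align*}
\sup_{\{(x_i,g_i,f_i)\}_{i\in \I}} &\ f_N-f_*\\
\text{subject to: } & \{(x_i,g_i,f_i)\}_{i\in \I}\text{ is }\mathcal{F}\text{-interpolable},\ g_*=0,\ \|x_0-x_*\|\leq R_x,\\
& \inner{g_i}{\sum_{j=0}^{i-1}\beta_{i,j}g_j+\sum_{j=1}^{i}\gamma_{i,j}(x_j-x_0)}=0,\ i=1,\dots,N.
\end{align*}

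\textbf{Step 3: Gram-matrix relaxation.} I would then lift to the SDP in the exact same way as in the proof of Lemma~\ref{lem:pep_sdp_relax}: introduce $P$ and $F$ as in~\eqref{eq:coord_and_grads_gfom} and set $G=P^{\top}P\succeq 0$. Under Assumption~\ref{a:ic} the $\mathcal{F}$-interpolability condition becomes the affine inequalities $\Tr{\icA_k G}+(\ica_k)^\top F+\icb_k\leq 0$; the orthogonality constraints become the scalar equalities $\inner{\bg_i}{\sum_{j=0}^{i-1}\beta_{i,j}\bg_j+\sum_{j=1}^{i}\gamma_{i,j}(\bx_j-\bx_0)}_G=0$ using the identities of~\eqref{eq:sdp_vardef}; and the initial condition becomes $\|\bx_0-\bx_*\|_G^2\leq R_x^2$. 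The objective $f_N-f_*$ rewrites as $F^\top\bfu_N-F^\top\bfu_*$. Any feasible point of the discrete problem maps to a feasible point of the SDP with the same objective value, so the SDP value is an upper bound, giving the desired inequality.

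\textbf{Main obstacle.} Honestly, there is no hard step here; the whole proof is a routine adaptation of Lemmas~\ref{lem:pep_for_gfom} and~\ref{lem:pep_sdp_relax}. The only subtle point to emphasize is that, unlike in Lemma~\ref{lem:pep_for_gfom}, we do not need to argue that $x_i$ lies in the span of previous gradients --- the defining constraint of the algorithm is now only the single orthogonality condition~\eqref{eq:newalgo_cstr}, which is directly expressible as one quadratic equality per iteration in the Gram-matrix variables. Consequently, no contraction-preserving property is invoked and no tightness claim is asserted, which is consistent with the statement of the lemma.
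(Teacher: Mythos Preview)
Your proposal is correct and is precisely the approach the paper has in mind: the paper omits the proof and states that it ``follows the exact same lines as for~\eqref{gfom_sdpPEP}'', i.e., the derivations of Lemmas~\ref{lem:pep_for_gfom} and~\ref{lem:pep_sdp_relax} with the GFOM constraints replaced by the single orthogonality equality, and with no tightness claim needed.
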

	We omit the proof since it follows the exact same lines as for \eqref{gfom_sdpPEP} (c.f.\ the derivations in~\cite{Article:Drori,taylor2015smooth}).

	\paragraph{Proof of Theorem~\ref{thm:gfom_reconstruction}}
	The key observation underlying the proof is that by taking the PEP for GFOM~\eqref{gfom_sdpPEP} and aggregating the constraints that define its iterates, we can reach a PEP for the class of methods~\eqref{eq:newalgo_cstr}.
	Furthermore, by Lemma~\ref{lem:aggregation}, this aggregation can be done in a way that maintains the optimal value of the program, thereby reaching a specific method in this class whose corresponding PEP attains an optimal value that is at least as good as that of the PEP for \AT{GFOM}.

	We perform the aggregation of the constraints as follows:
	for all $i=1,\dots,N$ we 
	aggregate the constraints which correspond to $\{\beta_{i,j}\}_{0\leq j<i}$, $\{\gamma_{i,j}\}_{1\leq j\leq i}$ (weighted by $\{\tilde \beta_{i,j}\}_{0\leq j<i}$, $\{\tilde\gamma_{i,j}\}_{1\leq j\leq i}$, respectively)
	into a single constraint, reaching
	\begin{align}
	w'(N, \mathcal{F}(\mathbb{R}^d),R_f,R_x,R_g) := &\sup_{ F\in\mathbb{R}^{N+1}, G\in\mathbb{R}^{2N+2\times 2N+2}} F^\top \bfu_N - F^\top \bfu_* \tag{PEP-SSEP$(\mathcal{F}(\Rd))$}\label{ssep_dPEP}\\
	& \begin{array}{lrl}
	\text{{subject to:} } 
	&\Tr {\icA_kG}+(\ica_k)^\top F+\icb_k\leq 0, & \quad\text{for all } k\in \K,\\
	&\inner{\bg_i}{\sum_{j=0}^{i-1}\tilde \beta_{i,j}\bg_j + \sum_{j=1}^{i} \tilde \gamma_{i,j}(\bx_j-\bx_0)}_G= 0, &\quad\text{for all } i=1,\hdots N,\\
	&\norm{\bx_0-\bx_*}_G^2-R_x^2\leq 0, &\\
	& G\succeq 0.
	\end{array}\notag
	\end{align}
	
	By Lemma~\ref{lem:aggregation} and the choice of weights $\{\tilde \beta_{i,j}\}_{0\leq j<i}$, $\{\tilde\gamma_{i,j}\}_{1\leq j\leq i}$ it follows that
	\[
	w'(N, \mathcal{F}(\mathbb{R}^d),R_x) \leq \tilde\omega.
	\]
	Finally, by Lemma~\ref{lem:fsmpep}, we conclude that $w'(N, \mathcal{F}(\mathbb{R}^d),R_x)$ forms an upper bound on the performance of the method~\eqref{eq:newalgo_cstr}, i.e.,
	for any valid pair $(f, x_0)$ and any $\{x_i\}_{i\geq 0}$ that satisfies \eqref{eq:newalgo_cstr} we have
	\[
	f(x_N)-\fopt \leq w'(N, \mathcal{F}(\mathbb{R}^d),R_x)\leq \tilde\omega.
	\]
	\qed

	\bibliographystyle{spmpsci}      
	\bibliography{bib_}{}   
	
\end{document}